\providecommand{\U}[1]{\protect\rule{.1in}{.1in}}
\theoremstyle{plain}
\newtheorem{corollary}{Corollary}
\newtheorem{example}{Example}
\newtheorem{lemma}{Lemma}
\newtheorem{theorem}{Theorem}
\numberwithin{equation}{section}
\begin{document}
\title[Path-connected Closure of Unitary Orbits]{Path-connected Closures of Unitary Orbits}
\author{Don Hadwin}
\address{University of New Hampshire}
\email{don@unh.edu}
\urladdr{}
\author{Wenjing Liu}
\curraddr{University of Massachusetts Boston}
\email{wenjingtwins87@gmail.com }
\subjclass[2010]{Primary 46L05 ; Secondary 47C15}
\keywords{C*-algebra, representation, unitary orbit}
\dedicatory{Dedicated to Lyra}
\begin{abstract}
If $\mathcal{A}$ and $\mathcal{B}$ are unital C*-algebras and $\pi
:\mathcal{A}\rightarrow\mathcal{B}$ is a unital $\ast$-homomorphism, then
$\mathcal{U}_{\mathbb{B}}\left(  \pi\right)  ^{-}$ is the set of all $\ast
$-homomorphisms from $\mathcal{A}$ to $\mathcal{B}$ that are approximately
(unitarily) equivalent to $\pi.$ We address the question of when
$\mathcal{U}_{\mathbb{B}}\left(  \pi\right)  ^{-}$ is path-connected with
respect to the topology of pointewse norm convergence. When $\mathcal{A}$ is
singly generated and $\mathcal{B}=B\left(  \ell^{2}\right)  $, an affirmative
answer was given in \cite{H}; we extend this to the case when $\mathcal{A}$ is
separable. We also give an affirmative answer when $\mathcal{A}$ is AF and
$\mathcal{B}$ is a von Neumann algebra, $\mathcal{A}$ is ASH and $\mathcal{B}$
is a finite von Neumann algebra, or when $\mathcal{A}$ is homogeneous and
$\mathcal{B}$ is an arbitrary von Neumann algebra.

\end{abstract}
\maketitle

\section{Introduction}

In \cite{H} D. Hadwin proved that the norm closure of the unitary orbit of an
operator in $B\left(  \ell^{2}\right)  $ is path connected. In this paper we
address the problem of extending this result to representations of separable C*-algebras.

Suppose $\mathcal{A}$ and $\mathcal{B}$ are unital C*-algebras and
$\mathcal{A}$ is separable. We define \textrm{Rep}$\left(  \mathcal{A}%
,\mathcal{B}\right)  $ as the set of all unital $\ast$-homomorphisms from
$\mathcal{A}$ to $\mathcal{B}$ with the topology of pointwise norm
convergence. Suppose $\left\{  a_{1},a_{2},\ldots\right\}  $ is a norm dense
subset of the closed unit ball of $\mathcal{A}$. We define a metric
$d=d_{\mathcal{A},\mathcal{B}}$ by%
\[
d\left(  \pi,\rho\right)  =\sum_{m,n=1}^{\infty}\frac{1}{2^{m+n}}\left\Vert
\pi\left(  a_{n}\right)  -\rho\left(  a_{n}\right)  \right\Vert \text{ .}%
\]
Clearly, $d$ makes \textrm{Rep}$\left(  \mathcal{A},\mathcal{B}\right)  $ into
a complete metric space. When $\mathcal{B}$ is finite-dimensional,
\textrm{Rep}$\left(  \mathcal{A},\mathcal{B}\right)  $ is compact.

Let $\mathcal{U}_{\mathcal{B}}$ denote the group of unitary elements of
$\mathcal{B}$. If $\pi\in$ \textrm{Rep}$\left(  \mathcal{A},\mathcal{B}%
\right)  $, we define the \textbf{unitary orbit} $\mathcal{U}_{\mathcal{B}%
}\left(  \pi\right)  $ of $\pi$ by%
\[
\mathcal{U}_{\mathcal{B}}\left(  \pi\right)  =\left\{  U^{\ast}\pi\left(
\cdot\right)  U:U\in\mathcal{U}_{B}\right\}  \text{.}%
\]
If $T\in\mathcal{B}$ we define the unitary orbit $\mathcal{U}_{\mathcal{B}%
}\left(  T\right)  $ of $T$ by
\[
\mathcal{U}_{\mathcal{B}}\left(  T\right)  =\left\{  U^{\ast}TU:U\in
\mathcal{U}_{\mathcal{B}}\right\}  .
\]
It is clear that $\mathcal{U}_{\mathcal{B}}\left(  T\right)  $ corresponds to
$\mathcal{U}_{\mathcal{B}}\left(  \pi\right)  $ when $\pi$ is the identity
representation of the identity representation of $C^{\ast}\left(  T\right)  $.

In this paper we address the problem of when $\mathcal{U}_{\mathcal{B}}\left(
\pi\right)  ^{-}$ is path-connected in \textrm{Rep}$\left(  \mathcal{A}%
,\mathcal{B}\right)  $. In Section $2$ we discuss special paths in
$\mathcal{U}_{\mathcal{B}}\left(  \pi\right)  ^{-}$. In Section $3$ we provide
an affirmative answer (Theorem \ref{separable}) for the case when
$\mathcal{B}=B\left(  \ell^{2}\right)  $. We reduce the separable case to the
singly generated case by tensoring with the algebra $\mathcal{K}\left(
\ell^{2}\right)  $ of compact operators on $\ell^{2}$. In Section $4$ we give
an affirmative answer (Theorem \ref{AF}) when $\mathcal{A}$ is AF and
$\mathcal{B}$ has the property that $\mathcal{U}_{p\mathcal{B}p}$ is connected
for every projection $p\in\mathcal{B}$. We also give an affirmative answer
(Theorem \ref{LF}) when there is an $LF$ C*-algebra $\mathcal{D}$ such that
$\mathcal{A}\subset\mathcal{D}\subset\mathcal{A}^{\#\#}$, and $\mathcal{B}$ is
an arbitrary finite von Neumann algebra. In section $5$ we give an affirmative
answer (Theorem \ref{abelian}) when $\mathcal{A}$ is abelian (or homogeneous)
and $\mathcal{B}$ is an arbitrary von Neumann algebra.

\section{Connectedness of $\mathcal{U}_{\mathcal{B}}$ and special paths}

An \emph{internal path} in $\mathcal{U}_{\mathcal{B}}\left(  \pi\right)  ^{-}$
joining $\pi$ to $\rho$ is a continuous map $\gamma:\left[  0,1\right]
\rightarrow\mathcal{U}_{\mathcal{B}}\left(  \pi\right)  ^{-}$ such that
$\gamma\left(  0\right)  =\pi,$ $\gamma\left(  1\right)  =\rho$ and
$\gamma\left(  t\right)  \in\mathcal{U}_{\mathcal{B}}\left(  \pi\right)  $
whenever $0\leq t<1$. A \emph{strong internal path} from $\pi$ to $\rho
\in\mathcal{U}_{\mathcal{B}}\left(  \pi\right)  ^{-}$ is a continuous map
$\gamma:[0,1)\rightarrow\mathcal{U}_{\mathcal{B}}$ such that%
\[
\lim_{t\rightarrow1^{-}}\gamma\left(  t\right)  ^{\ast}\pi\left(  {}\right)
\gamma\left(  t\right)  =\rho\text{.}%
\]

In \cite[Theorem 3.9]{H} the first author proved that $\mathcal{U}%
_{\mathcal{B}}\left(  T\right)  ^{-}$ is always path connected when
$\mathcal{B}=B\left(  \ell^{2}\right)  $. Actually a slightly stronger result
was proved.

\begin{theorem}
\label{old}\cite[Theorem 3.9]{H} Suppose $X\in B\left(  \ell^{2}\right)  $ and
$Y\in\mathcal{U}_{B\left(  \ell^{2}\right)  }\left(  X\right)  ^{-}$. Then
there is a $W$ such that

\begin{enumerate}
\item $W$ is unitarily equivalent to $W\oplus W\oplus\cdots$ ,

\item $X\oplus W$ is unitarily equivalent to $Y\oplus W$,

\item If $C\in B\left(  \ell^{2}\right)  $ is unitarily equivalent to $X\oplus
W$, then

\begin{enumerate}
\item $C\in\mathcal{U}_{B\left(  \ell^{2}\right)  }\left(  X\right)
^{-}=\mathcal{U}_{B\left(  \ell^{2}\right)  }\left(  Y\right)  ^{-}$,

\item there is a strong internal path in $\mathcal{U}_{B\left(  \ell
^{2}\right)  }\left(  X\right)  ^{-}$ from $X$ to $C$, and

\item there is a strong internal path in $\mathcal{U}_{B\left(  \ell
^{2}\right)  }\left(  Y\right)  ^{-}$from $Y$ to $C$.
\end{enumerate}
\end{enumerate}
\end{theorem}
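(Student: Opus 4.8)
The statement is essentially a restatement of Hadwin's [Theorem 3.9] from \cite{H}, so the "proof" I would give is really a guided tour through the classical approximate-equivalence machinery for subalgebras of $B(\ell^2)$. The key input is Voiculescu's theorem (in the non-commutative Weyl–von Neumann form): if $X\in B(\ell^2)$ and $Y$ is approximately unitarily equivalent to $X$, then $X$ is approximately unitarily equivalent to $X\oplus Y\oplus Y\oplus\cdots$, and likewise to $X\oplus Y$. More precisely, since $Y\in\mathcal U_{B(\ell^2)}(X)^-$ forces $C^*(X)$ and $C^*(Y)$ to have the same representation theory modulo the compacts, Voiculescu's theorem gives that any two representations with the same kernel and the same "which subrepresentations are approximable by finite-rank pieces" data are approximately equivalent after adding an ample summand. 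So the natural candidate is to take $W = X^{(\infty)}\oplus Y^{(\infty)}$, i.e. a countably infinite ampliation of $X\oplus Y$. Property (1) is then immediate because $W\oplus W\oplus\cdots \cong W$ up to unitary equivalence (a countable direct sum of copies of an infinite ampliation is again that infinite ampliation), and property (2), $X\oplus W\cong_u Y\oplus W$, follows because both sides are unitarily equivalent to $(X\oplus Y)^{(\infty)}$.

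\textbf{Carrying out (3).} For part (3a) I would argue: if $C\cong_u X\oplus W$, then $C\oplus W\cong_u X\oplus W\oplus W\cong_u X\oplus W\cong_u C$ by (1), so $C$ absorbs $W$; combining with Voiculescu's theorem (the absorption characterization of approximate equivalence: two operators are approximately equivalent iff each absorbs an infinite ampliation of the other) and (2), one gets $C\in\mathcal U_{B(\ell^2)}(X)^-=\mathcal U_{B(\ell^2)}(Y)^-$. For the path statements (3b) and (3c), the standard device is the "rotation trick" on $\ell^2\oplus\ell^2$: if $C\cong_u X\oplus W$ and $W\cong_u W^{(\infty)}$, write $X\oplus W\cong_u X\oplus W^{(\infty)}$ and use the infinite repetition to build an explicit continuous unitary path. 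Concretely, if $T\cong T^{(\infty)}$ acting on $\mathcal H\otimes\ell^2$, one can produce a norm-continuous path of unitaries $V_t$ on $\mathcal H\oplus(\mathcal H\otimes\ell^2)$ interpolating between the identity and a unitary carrying $X\oplus T$ to $T$ (a telescoping/Bott-type shift dressed up by rotations $\begin{pmatrix}\cos\theta_t & -\sin\theta_t\\ \sin\theta_t & \cos\theta_t\end{pmatrix}$ applied coordinatewise), so that $V_t^*(X\oplus T)(\cdot)V_t$ is a continuous path inside the unitary orbit converging as $t\to1^-$ to (a copy of) $T\cong C$. Since this path stays genuinely in the unitary orbit for $t<1$, it is a strong internal path; the same construction with $Y$ in place of $X$ gives (3c).

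\textbf{Main obstacle.} The conceptual content is all in Voiculescu's theorem and the absorption lemmas, which I would quote; the genuine work — and the part most likely to hide a subtlety — is the explicit construction in (3b)/(3c) of a path of unitaries that is \emph{norm}-continuous as a map $[0,1)\to\mathcal U_{B(\ell^2)}$ and whose conjugation action converges in the point-norm topology. Getting the continuity at the level of the unitaries (not merely of the conjugated operators) and ensuring the limit is exactly $C$ rather than merely something approximately equivalent to $C$ requires care: one typically has to first replace $C$ by a unitarily equivalent copy acting on the "same" space as $X\oplus W$ so that the rotation path literally lands on $C$, and one has to control the rate at which the rotation angles approach their limits so that $\gamma(t)^*(X\oplus W)(\cdot)\gamma(t)$ converges. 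I would therefore organize the proof so that the bulk is a single lemma: "if $T\cong_u T^{(\infty)}$ then for every $S$ there is a strong internal path in $\mathcal U(S\oplus T)^-$ from $S\oplus T$ to $T$," and then deduce (3b), (3c) by applying it with $S=X$ (resp. $S=Y$) and $T=W$, using (1).
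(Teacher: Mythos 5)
The paper does not prove this theorem; it quotes it from \cite{H}, so the only question is whether your reconstruction is correct. It is not: your choice $W=X^{(\infty)}\oplus Y^{(\infty)}$ fails part (3a). The absorption principle you invoke --- ``two operators are approximately equivalent iff each absorbs an infinite ampliation of the other'' --- is a misstatement of Voiculescu's theorem. One may only absorb representations that annihilate $C^{*}(X)\cap\mathcal{K}(\ell^{2})$; in general $X\not\sim_{a}X\oplus X^{(\infty)}$. Concretely, take $X$ a rank-one projection. Then $\mathcal{U}_{B(\ell^{2})}(X)^{-}$ consists precisely of rank-one projections (rank is locally constant on projections in norm), so $Y\cong_{u}X$, while your $W$ is an infinite-rank projection and $X\oplus W$ has infinite rank, hence $X\oplus W\notin\mathcal{U}_{B(\ell^{2})}(X)^{-}$. (Here the correct $W$ is $0$ on an infinite-dimensional space.) In general $W$ must be an infinite ampliation of $\sigma(X)$ for a representation $\sigma$ of $C^{*}(X)$ factoring through the Calkin image of $C^{*}(X)$ --- equivalently, one must have $\operatorname{rank}p(W,W^{*})\leq\operatorname{rank}_{\mathrm{ess}}p(X,X^{*})\cdot\aleph_{0}$ for every $\ast$-polynomial $p$, matching Hadwin's rank characterization of approximate equivalence. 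With that choice, (1) and (2) do follow along the lines you indicate, and (3a) follows from $X\sim_{a}X\oplus W$.

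There is a second, related gap in (3b)--(3c). Your proposed lemma, ``if $T\cong_{u}T^{(\infty)}$ then for every $S$ there is a strong internal path in $\mathcal{U}(S\oplus T)^{-}$ from $S\oplus T$ to $T$,'' is false for the same reason ($S$ a projection of infinite rank, $T=0$ gives $T\notin\mathcal{U}(S\oplus T)^{-}$), and even where it holds it produces a path ending at $T=W$, not at $C\cong_{u}X\oplus W$, and starting at $X\oplus W$ rather than at $X$ acting on $\ell^{2}$. The path one actually needs runs from $X$ itself to a fixed copy $C$ of $X\oplus W$: one uses $X\sim_{a}X\oplus W$ to choose unitaries $V_{n}$ with $\Vert V_{n}^{*}XV_{n}-C\Vert\rightarrow0$ and then interpolates between consecutive $V_{n}$ by rotations inside the infinitely ampliated summand $W^{(\infty)}$, controlling the conjugates along the interpolation. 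Your instinct that this interpolation is the delicate point is right, but the rotation trick must be set up around the correct, Calkin-supported $W$; as written, the construction does not connect the required endpoints.
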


There is no reason, a priori, that $\mathcal{U}_{\mathcal{B}}\left(
\pi\right)  $ is even connected. It is well-known that if $P$ and $Q$ are
projections in a unital C*-algebra $\mathcal{B}$ and $\left\Vert
P-Q\right\Vert <1$, then $P$ and $Q$ are unitarily equivalent \cite{R}. It was
proved in \cite{DH} that two unital representations $\pi,\rho$ of a
finite-dimensional C*-algebras $\mathcal{A}$ are unitarily equivalent if and
only if $\pi\left(  p\right)  $ is unitarily equivalent to $\rho\left(
p\right)  $ for every minimal projection $p\in\mathcal{A}$.

If $\mathcal{U}_{\mathcal{B}}$ is connected, then every $\mathcal{U}%
_{\mathcal{B}}\left(  \pi\right)  $ must be connected. If $x\in\mathcal{B}$
and $\left\Vert 1-x\right\Vert <1$ , then $(-\infty,-1]\cap\sigma\left(
x\right)  =\varnothing,$  so $A\left(  x\right)  =-\log\left(  x\right)
\in\mathcal{B}$, $A\left(  x\right)  =A\left(  x\right)  ^{\ast},$ and
$x=e^{iA\left(  x\right)  }.$ Since $t\mapsto e^{i\left(  1-t\right)  A\left(
x\right)  }$ is a path in $\mathcal{U}_{\mathcal{B}}$ from $x$ to $1$, we see
that $\left\{  x\in\mathcal{U}_{\mathcal{B}}:\left\Vert 1-x\right\Vert
<1\right\}  $ is contained in the path component $W$ of $1$in $\mathcal{U}%
_{\mathcal{B}}$.  Since $W=\cup uW$ such that $u\in W$, we see that $W$ is
open in $\mathcal{U}_{\mathbb{B}}$. Thus $\mathcal{U}_{\mathcal{B}}$ is
connected if and only if it is path-connected. This means that if
$\mathcal{U}_{\mathcal{B}}$ is connected, then $\mathcal{U}_{\mathcal{B}%
}\left(  \pi\right)  $ is path-connected.

\begin{lemma}
\label{fd}If $\mathcal{A}$ is finite-dimensional, then for every $\mathcal{B}$
and every $\pi\in$ \textrm{Rep}$\left(  \mathcal{A},\mathcal{B}\right)  $,
$\mathcal{U}_{\mathcal{B}}\left(  \pi\right)  $ is closed
\end{lemma}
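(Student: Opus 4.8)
The plan is to prove the slightly stronger statement that $\mathcal{U}_{\mathcal{B}}\left(\pi\right)$ coincides with its closure in \textrm{Rep}$\left(\mathcal{A},\mathcal{B}\right)$: if $\rho$ is the $d$-limit of a sequence $\pi_n\in\mathcal{U}_{\mathcal{B}}\left(\pi\right)$, then $\rho$ is itself unitarily equivalent to $\pi$. The two inputs I would use are exactly the two facts recalled just before the lemma: first, that projections $P,Q$ in a unital C*-algebra with $\left\Vert P-Q\right\Vert<1$ are unitarily equivalent \cite{R}; second, the result of \cite{DH} that two unital representations of a finite-dimensional $\mathcal{A}$ are unitarily equivalent if and only if they carry every minimal projection of $\mathcal{A}$ to unitarily equivalent projections.

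First I would note that the metric $d$ induces pointwise norm convergence: since unital $\ast$-homomorphisms are contractive and $\left\{a_1,a_2,\ldots\right\}$ is dense in the unit ball of $\mathcal{A}$, $d\left(\pi_n,\rho\right)\rightarrow 0$ forces $\left\Vert\pi_n\left(a\right)-\rho\left(a\right)\right\Vert\rightarrow 0$ for every $a\in\mathcal{A}$. Writing $\pi_n=U_n^{\ast}\pi\left(\cdot\right)U_n$ with $U_n\in\mathcal{U}_{\mathcal{B}}$, this shows that for each projection $p\in\mathcal{A}$ the projections $\pi_n\left(p\right)$ converge in norm to the projection $\rho\left(p\right)$.

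Now fix an arbitrary minimal projection $p\in\mathcal{A}$. For $n$ large, $\left\Vert\pi_n\left(p\right)-\rho\left(p\right)\right\Vert<1$, so $\pi_n\left(p\right)$ and $\rho\left(p\right)$ are unitarily equivalent in $\mathcal{B}$ by \cite{R}; and $\pi_n\left(p\right)=U_n^{\ast}\pi\left(p\right)U_n$ is unitarily equivalent to $\pi\left(p\right)$. Hence $\pi\left(p\right)$ and $\rho\left(p\right)$ are unitarily equivalent, and since $p$ was arbitrary, \cite{DH} gives that $\pi$ and $\rho$ are unitarily equivalent; thus $\rho\in\mathcal{U}_{\mathcal{B}}\left(\pi\right)$, and $\mathcal{U}_{\mathcal{B}}\left(\pi\right)$ is closed.

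I do not expect a genuine obstacle here. The one point worth a sentence of care is that the conclusion ``$\pi\left(p\right)$ is unitarily equivalent to $\rho\left(p\right)$'', once extracted from a single sufficiently large $n$, is independent of $n$, so it is obtained for every minimal projection $p$ without needing one index $n$ that works simultaneously for all of them; alternatively one could reduce to finitely many $p$ (one per matrix block of $\mathcal{A}$), using that a $\ast$-homomorphism sends a unitary in $\mathcal{A}$ implementing the equivalence of two minimal projections of $\mathcal{A}$ to a unitary in $\mathcal{B}$.
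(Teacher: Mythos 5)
Your proof is correct and is essentially the argument the paper has in mind: the paper's proof simply cites \cite[Theorem 2 (4)]{DH} (that approximate equivalence implies unitary equivalence for finite-dimensional $\mathcal{A}$), and your argument reconstructs exactly that fact from the two ingredients the paper recalls just before the lemma, namely that projections at distance less than $1$ are unitarily equivalent and that unitary equivalence of representations of finite-dimensional algebras is detected on minimal projections. Your closing remark that the conclusion for each fixed minimal projection $p$ needs only one sufficiently large $n$, with no uniformity over $p$ required, is the right point of care and is handled correctly.
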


\begin{proof}
It follows from \cite[Theorem 2 (4)]{DH} that if $\rho\in\mathcal{U}%
_{\mathcal{B}}\left(  \pi\right)  ^{-}$, then $\rho\in\mathcal{U}%
_{\mathcal{B}}\left(  \pi\right)  $.
\end{proof}

\begin{example}
B. Blackadar \cite{B} showed that in $\mathcal{B}=\mathbb{M}_{2}\left(
C\left(  S^{3}\right)  \right)  $ there are two projections $P,Q$ that are
unitarily equivalent, but are not homotopy equivalent. Thus $\mathcal{U}%
_{\mathcal{B}}\left(  P\right)  =\mathcal{U}_{\mathcal{B}}\left(  P\right)
^{-}$ is not path-connected. This implies that $\mathcal{U}_{\mathcal{B}}$ is
not connected.
\end{example}

We say that a unital C*-algebra $\mathcal{B}$ has \textbf{property UC} if
$\mathcal{U}_{\mathcal{B}}$ is connected. The algebra $\mathcal{B}$ has
\textbf{property HUC} if, for every projection $P\in\mathcal{B}$,
$P\mathcal{B}P$ has property UC. We say that $\mathcal{B}$ is
\emph{matricially stable} if and only if, for every $n\in\mathbb{N}$,
$\mathcal{B}$ is isomorphic to $\mathbb{M}_{n}\left(  \mathcal{B}\right)  $.

\begin{lemma}
The following are true:

\begin{enumerate}
\item Every von Neumann algebra has property HUC.

\item A direct limit of unital C*-algebras with property HUC has property HUC.

\item Every unital AF algebra has property HUC.

\item If $\mathcal{A}$ is a unital C*-algebra and, for every $n\in\mathbb{N}$,
$\mathbb{M}_{n}\left(  \mathbb{C}\right)  $ has property UC, then
$K_{1}\left(  \mathcal{A}\right)  =0$.

\item If $\mathcal{B}$ is matricially stable, then $\mathcal{B}$ has property
UC if and only if $K_{1}\left(  \mathcal{B}\right)  =0$.
\end{enumerate}
\end{lemma}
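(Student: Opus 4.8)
The plan is to dispose of (1)--(3) by soft arguments, to treat (4) and the forward half of (5) as essentially the definition of $K_{1}$, and to isolate the converse half of (5) as the one genuinely substantive point. For (1) I would first note that $P\mathcal{M}P$ is again a von Neumann algebra whenever $P$ is a projection in a von Neumann algebra $\mathcal{M}$, so it suffices to show every von Neumann algebra has property UC; and given $U\in\mathcal{U}_{\mathcal{M}}$, the Borel functional calculus furnishes a self-adjoint $H\in\mathcal{M}$ with $\|H\|\leq\pi$ and $U=e^{iH}$ (take any Borel branch of $-i\log$ on the unit circle), so $t\mapsto e^{itH}$ is a norm-continuous path in $\mathcal{U}_{\mathcal{M}}$ from $U$ to $1$. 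For (2) I would write the limit as $\mathcal{B}=\overline{\bigcup_{i}\mathcal{B}_{i}}$ with $(\mathcal{B}_{i})$ an increasing net of unital C*-subalgebras each having property HUC, and first check that $\mathcal{B}$ has property UC: a unitary $u\in\mathcal{U}_{\mathcal{B}}$ is close to an invertible $b\in\mathcal{B}_{i}$, so $w=b(b^{\ast}b)^{-1/2}\in\mathcal{U}_{\mathcal{B}_{i}}$ and $\|1-uw^{\ast}\|<1$; then $uw^{\ast}$ lies in the path component $W$ of $1$ (as shown above), while $w\in W$ because $\mathcal{B}_{i}$ has property UC and the witnessing path lies in $\mathcal{B}$, hence $u=(uw^{\ast})w\in W$. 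For HUC, given a projection $P\in\mathcal{B}$, functional calculus on a nearby self-adjoint element of some $\mathcal{B}_{i}$ produces a projection $P_{i}\in\mathcal{B}_{i}$ with $\|P-P_{i}\|<1$, so $P=U^{\ast}P_{i}U$ for a unitary $U$ by \cite{R}; then $P\mathcal{B}P\cong P_{i}\mathcal{B}P_{i}=\overline{\bigcup_{j\geq i}P_{i}\mathcal{B}_{j}P_{i}}$ is an increasing union of corners of the HUC algebras $\mathcal{B}_{j}$, each with property UC, so $P\mathcal{B}P$ has property UC by the case just treated. Part (3) follows at once, since a unital AF algebra is the closure of an increasing union of unital finite-dimensional subalgebras, each a finite-dimensional von Neumann algebra and hence HUC by (1); now apply (2).

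For (4) --- where the hypothesis should presumably read that $\mathbb{M}_{n}(\mathcal{A})$ has property UC for every $n$ --- I would recall that $K_{1}(\mathcal{A})=\varinjlim_{n}\mathcal{U}(\mathbb{M}_{n}(\mathcal{A}))/\mathcal{U}_{0}(\mathbb{M}_{n}(\mathcal{A}))$, where $\mathcal{U}_{0}$ denotes the path component of the identity (which, for the unitary group of a unital C*-algebra, is open and equals the connected component, as noted above, and one passes from $GL_{n}$ to $U_{n}$ by polar decomposition). If each $\mathcal{U}(\mathbb{M}_{n}(\mathcal{A}))$ is connected it equals $\mathcal{U}_{0}(\mathbb{M}_{n}(\mathcal{A}))$, so every term of the inductive system is trivial and $K_{1}(\mathcal{A})=0$. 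The forward direction of (5) is then immediate: if $\mathcal{B}$ is matricially stable with property UC, each $\mathbb{M}_{n}(\mathcal{B})\cong\mathcal{B}$ has property UC, whence $K_{1}(\mathcal{B})=0$.

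The real content is the converse in (5). Assuming $K_{1}(\mathcal{B})=0$, fix $u\in\mathcal{U}_{\mathcal{B}}$; since $[u]=0$ in $K_{1}(\mathcal{B})$ there is an $n$ with $u\oplus 1_{n-1}$ homotopic to $1_{n}$ in $\mathcal{U}(\mathbb{M}_{n}(\mathcal{B}))$, and a Whitehead-lemma manipulation (using that $\mathrm{diag}(u_{1},\dots,u_{n})$ is homotopic to $\mathrm{diag}(u_{1}\cdots u_{n},1,\dots,1)$) upgrades this to $1_{n}\otimes u=\mathrm{diag}(u,\dots,u)$ being homotopic to $1_{n}$ in $\mathcal{U}(\mathbb{M}_{n}(\mathcal{B}))$. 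Choosing a $\ast$-isomorphism $\varphi\colon\mathbb{M}_{n}(\mathcal{B})\to\mathcal{B}$ and transporting the homotopy, $\varphi(1_{n}\otimes u)$ lies in $\mathcal{U}_{0}(\mathcal{B})$. The step I expect to be the main obstacle is to descend from here to $u$ itself --- equivalently, to show that for matricially stable $\mathcal{B}$ the stabilization map $\mathcal{U}_{\mathcal{B}}/\mathcal{U}_{0}(\mathcal{B})\to K_{1}(\mathcal{B})$ is injective. This is not a formality: in general $v\oplus 1$ homotopic to $1$ does not force $v$ homotopic to $1$, and it is exactly here that matricial stability must be used essentially (every reduction that only exploits the abstract isomorphisms $\mathbb{M}_{k}(\mathcal{B})\cong\mathcal{B}$ together with Whitehead moves is circular, since these algebras are self-similar). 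I would attack it through the decomposition $\mathcal{B}\cong\mathbb{M}_{n}\otimes(\varphi(e_{11})\mathcal{B}\varphi(e_{11}))$, where $e_{11}$ is a rank-one diagonal matrix unit of $\mathbb{M}_{n}$ inside $\mathbb{M}_{n}(\mathcal{B})$ and $\varphi(e_{11})\mathcal{B}\varphi(e_{11})\cong\mathcal{B}$, which exhibits the homotopy from $1_{n}\otimes u$ to $1$ as taking place over a unital copy of $\mathbb{M}_{n}$ sitting in $\mathcal{B}$, and then extract from it an honest path from $u$ to $1_{\mathcal{B}}$ --- invoking, if need be, the fact that the stabilization map is an isomorphism for matricially stable unital C*-algebras. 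Granting this cancellation, $u\in\mathcal{U}_{0}(\mathcal{B})$, so $\mathcal{U}_{\mathcal{B}}$ is connected, and in fact $\mathcal{U}_{\mathcal{B}}/\mathcal{U}_{0}(\mathcal{B})\cong K_{1}(\mathcal{B})$, which completes (5).
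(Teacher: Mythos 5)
Your treatment of (1)--(3) is correct and is essentially the paper's own argument: exponentiate via Borel functional calculus for (1); for (2) use that the path component $W$ of $1$ is an open (hence clopen) subgroup, push approximating unitaries down into some $\mathcal{B}_{i}$, and for the hereditary part replace $P$ by a nearby projection $P_{i}\in\mathcal{B}_{i}$ and write $P_{i}\mathcal{B}P_{i}$ as the closure of the increasing union of the corners $P_{i}\mathcal{B}_{j}P_{i}$; (3) is (1) plus (2). You are in fact slightly more careful than the paper in (2): the paper simply asserts the existence of a unitary $V\in\mathcal{A}_{\lambda}$ with $\left\Vert U-V\right\Vert<\varepsilon$, and your invertible-plus-polar-decomposition step is the justification it omits. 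Your reading of (4) --- that the hypothesis must mean $\mathbb{M}_{n}\left(\mathcal{A}\right)$ has property UC --- is certainly the intended one, and your deduction matches the paper's ``this follows from the definition of $K_{1}$.''

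On (5) you have put your finger on a real issue. The paper's entire proof is ``this follows from $\left(4\right)$,'' which, as you note, only yields the forward implication (UC together with matricial stability forces $K_{1}\left(\mathcal{B}\right)=0$). The converse is precisely the statement that the stabilization map $\mathcal{U}_{\mathcal{B}}/\mathcal{U}_{0}\left(\mathcal{B}\right)\rightarrow K_{1}\left(\mathcal{B}\right)$ is injective ($K_{1}$-injectivity of $\mathcal{B}$), and this is not a formality: $u\oplus1\sim_{h}1$ does not in general imply $u\sim_{h}1$ (for instance $\mathbb{M}_{2}\left(C\left(S^{5}\right)\right)$ fails $K_{1}$-injectivity, since $\pi_{5}\left(U\left(2\right)\right)\cong\mathbb{Z}/2$ dies in $K^{1}\left(S^{5}\right)\cong\mathbb{Z}$), and transporting a homotopy through an abstract isomorphism $\mathbb{M}_{n}\left(\mathcal{B}\right)\cong\mathcal{B}$ carries $u\oplus1_{n-1}$ to a unitary with no evident path to $u$. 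Your proposed fix ultimately ``invokes, if need be'' exactly the cancellation statement that has to be proved, so the backward implication of (5) is not established in your write-up --- but neither is it established by the paper's one-line proof, and (5) is not used in the paper's main theorems. In short: you prove everything the paper's proof actually proves, by the same route, and you have correctly isolated $K_{1}$-injectivity of matricially stable algebras as the missing ingredient for the converse in (5).
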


\begin{proof}
$\left(  1\right)  $. In a von Neumann algebra $\mathcal{A}$ every unitary $U$
can be written $U=e^{iA}$ with $A=A^{\ast}$, and the path $g\left(  t\right)
=e^{\left(  1-t\right)  iA}$ connects $U$ to $1$ in $\mathcal{U}_{\mathcal{A}%
}$. Thus $\mathcal{A}$ has property UC. But $P\mathcal{A}P$ is a von Neumann
algebra for every projection $P\in\mathcal{A}$. Thus $\mathcal{A}$ has
property HUC.

$\left(  2\right)  $. Suppose $\left\{  \mathcal{A}_{\lambda}:\lambda
\in\Lambda\right\}  $ is an increasingly directed family of unital
C*-subalgebras of a unital C*-subalgebras $\mathcal{A}$ with property UC, and
$\mathcal{A}=\left[  \cup_{\lambda\in\Lambda}\mathcal{A}_{\lambda}\right]
^{-}.$ Let $K$ be the connected component of $\mathcal{U}_{\mathcal{A}}$ that
contains $1$. Suppose $U\in\mathcal{U}_{\mathcal{A}}$ and $\varepsilon>0$.
Then there is a $\lambda\in\Lambda$ and a unitary $V\in\mathcal{A}_{\lambda}$
such that $\left\Vert U-V\right\Vert <\varepsilon$. However, if $E_{\lambda}$
is the connected component of $\mathcal{U}_{\mathcal{A}_{\lambda}}$, we have
$V\in E_{\lambda}\subset E$. Since $\varepsilon>0$, $U\in E^{-}=E$. Next
suppose each $\mathcal{A}_{\lambda}$ has property HUC and $P\in\mathcal{A}$ is
a projection. Then there is a $\lambda_{0}\in\Lambda$ and a projection
$Q\in\mathcal{A}_{\lambda_{0}}$ such that $\left\Vert P-Q\right\Vert <1$,
which implies there is a unitary $W\in\mathcal{A}$ such that $P=W^{\ast}QW$.
Hence%
\[
P\mathcal{A}P=W^{\ast}QW\mathcal{A}W^{\ast}QW=W^{\ast}\left(  Q\mathcal{A}%
Q\right)  .
\]
Thus $P\mathcal{A}P$ is isomorphic to
\[
Q\mathcal{A}Q=\left[  \cup_{\lambda\geq\lambda_{0}}Q\mathcal{A}_{\lambda
}Q\right]  ^{-}.
\]
Since each $Q\mathcal{A}_{\lambda}Q$ has property UC when $Q\in\mathcal{A}%
_{\lambda}$, we see that $P\mathcal{A}P$ has property UC. Thus $\mathcal{A}$
has property HUC.

$\left(  3\right)  $. This follows from $\left(  1\right)  $ and $\left(
2\right)  $.

$\left(  4\right)  $. This follows from the definition of $K_{1}\left(
\mathcal{A}\right)  $.

$\left(  5\right)  $. This follows from $\left(  4\right)  $.
\end{proof}

\section{$B\left(  \ell^{2}\right)  $}

In this section we extend Theorem \ref{old} to the case where the single
operator is replaced with a representation of a separable C*-algebra. The key
idea is a result of C. Olsen and W. Zame \cite{OZ} that if $\mathcal{A}$ is a
separable C*-algebra, then $\mathcal{A}\otimes\mathcal{K}\left(  \ell
^{2}\right)  $ is singly generated. This gives us a general technique for
relating the separable case to the singly generated case.

Suppose $\mathcal{A}$ is a unital C*-algebra. Let $\mathcal{A}^{\dagger}$
denote the unitization of $\mathcal{A}\otimes\mathcal{K}\left(  \ell
^{2}\right)  $. If $\pi\in$ \textrm{Rep}$\left(  \mathcal{A},\mathcal{B}%
\right)  $ we define $\pi^{\dagger}:\mathcal{A}^{\dagger}\rightarrow
\mathcal{B}^{\dagger}$ by%
\[
\pi^{\dagger}\left(  \lambda1+\left(  a_{ij}\right)  \right)  =\lambda
1+\left(  \pi\left(  a_{ij}\right)  \right)  \text{.}%
\]
Let $\mathcal{B}^{\maltese}$ be the C*-algebra generated by $\mathcal{B}%
^{\dagger}$ and $\left\{  diag\left(  a,a,\ldots\right)  :a\in\mathcal{A}%
\right\}  $.

\begin{theorem}
\label{tensor}Suppose $\mathcal{A}$ and $\mathcal{B}$ are unital C*-algebras
and $\pi,\rho\in$ \textrm{Rep}$\left(  \mathcal{A},\mathcal{B}\right)  $. Then

\begin{enumerate}
\item The map $\rho\mapsto\rho^{\dagger}$ from \textrm{Rep}$\left(
\mathcal{A},\mathcal{B}\right)  $ to \textrm{Rep}$\left(  \mathcal{A}%
^{\dagger},\mathcal{B}^{\dagger}\right)  $ is continuous.

\item If $\pi,\rho\in$ \textrm{Rep}$\left(  \mathcal{A},\mathcal{B}\right)  $,
then%
\[
\rho\in\mathcal{U}_{\mathcal{B}}\left(  \pi\right)  ^{-}\text{ if and only if
}\rho^{\dagger}\in\mathcal{U}_{\mathcal{B}^{\dagger}}\left(  \pi^{\dagger
}\right)  ^{-}\text{. }%
\]

\item If $\rho$ $\in\mathcal{U}_{\mathcal{B}}\left(  \pi\right)  ^{-}$ and
there is an internal path in $\mathcal{U}\left(  \pi\right)  ^{-}$ joining
$\pi$ to $\rho$, then there is an internal path in $\mathcal{U}_{\mathcal{B}%
^{\maltese}}\left(  \pi^{\dagger}\right)  ^{-}$ joining $\pi^{\mathcal{\dagger
}}$ to $\rho^{\mathcal{\dagger}}$.

\item If

\begin{enumerate}
\item $\mathcal{B}^{\dagger}\subset\mathcal{E}$ is a C*-algebra with
$e_{11}\mathcal{E}e_{11}=e_{11}\mathcal{B}^{\dagger}e_{11}$,

\item $\rho_{1}\in\mathcal{U}_{\mathcal{E}}\left(  \pi^{\dagger}\right)  ^{-}$,

\item For every $a\in\mathcal{A}$,
\[
\rho_{1}\left(  diag\left(  a,0,0,\ldots\right)  \right)  =diag\left(
\rho\left(  a\right)  ,0,0,\ldots\right)
\]

\item $\mathcal{U}_{\mathcal{B}}$ is connected,

\item $\mathcal{B}^{\dagger}\subset\mathcal{E}$ is a C*-algebra with
$e_{11}\mathcal{E}e_{11}=e_{11}\mathcal{B}^{\dagger}e_{11},$ and

\item there is a strong internal path in $\mathcal{U}_{\mathcal{E}}\left(
\pi^{\dagger}\right)  ^{-}$ from $\pi^{\dagger}$ to $\rho_{1}$,
\end{enumerate}

then there is a strong internal path in $\mathcal{U}_{\mathcal{B}}\left(
\pi\right)  ^{-}$ from $\pi$ to $\rho$.
\end{enumerate}
\end{theorem}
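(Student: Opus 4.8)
The plan is to treat the four assertions in order, using (1) to get (2), (1)--(2) to get (3), and to put the real work into (4).

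For (1) I would fix $x=\lambda 1+(a_{ij})\in\mathcal{A}^{\dagger}$ with $(a_{ij})\in\mathcal{A}\otimes\mathcal{K}(\ell^{2})$; given $\varepsilon>0$, truncate $(a_{ij})$ in norm to within $\varepsilon$ by a matrix $(a_{ij}')$ supported on a finite block $[1,N]\times[1,N]$. Since $\|(b_{ij})_{i,j\le N}\|\le\sum_{i,j\le N}\|b_{ij}\|$ and every $\sigma^{\dagger}$ is contractive, $\|\rho^{\dagger}(x)-\sigma^{\dagger}(x)\|\le 2\varepsilon+\sum_{i,j\le N}\|\rho(a_{ij}')-\sigma(a_{ij}')\|$, and the finite sum is small whenever $\sigma$ is close to $\rho$ pointwise in norm; this gives continuity. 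For (2), the forward implication: given a finite $F\subset\mathcal{A}^{\dagger}$ and $\varepsilon>0$, truncate the elements of $F$ to a common block $[1,N]^{2}$, choose $U\in\mathcal{U}_{\mathcal{B}}$ with $U^{*}\pi(\cdot)U$ close to $\rho(\cdot)$ on the finitely many relevant entries, and take $W=\operatorname{diag}(U,\dots,U,1,1,\dots)$ with $N$ copies of $U$; then $W\in\mathcal{U}_{\mathcal{B}^{\dagger}}$ because $W-1$ is finite rank, and $W^{*}\pi^{\dagger}(\cdot)W$ is close to $\rho^{\dagger}(\cdot)$ on $F$. For the converse, take $W_{n}\in\mathcal{U}_{\mathcal{B}^{\dagger}}$ with $W_{n}^{*}\pi^{\dagger}(\cdot)W_{n}\to\rho^{\dagger}$; then $P_{n}:=W_{n}^{*}e_{11}W_{n}=W_{n}^{*}\pi^{\dagger}(1_{\mathcal{A}}\otimes e_{11})W_{n}\to e_{11}$, so eventually $\|P_{n}-e_{11}\|<1$ and there are unitaries $Z_{n}\in\mathcal{B}^{\dagger}$, $Z_{n}\to 1$, with $Z_{n}^{*}P_{n}Z_{n}=e_{11}$; after replacing $W_{n}$ by $W_{n}Z_{n}$ we may assume $W_{n}$ commutes with $e_{11}$, so $u_{n}:=e_{11}W_{n}e_{11}$ is unitary in $e_{11}\mathcal{B}^{\dagger}e_{11}\cong\mathcal{B}$, and compressing $W_{n}^{*}\pi^{\dagger}(a\otimes e_{11})W_{n}\to\rho(a)\otimes e_{11}$ to the $e_{11}$-corner shows $u_{n}^{*}\pi(a)u_{n}\to\rho(a)$, i.e.\ $\rho\in\mathcal{U}_{\mathcal{B}}(\pi)^{-}$.

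For (3), let $\gamma$ be an internal path in $\mathcal{U}_{\mathcal{B}}(\pi)^{-}$ from $\pi$ to $\rho$ and, for $t<1$, write $\gamma(t)=U_{t}^{*}\pi(\cdot)U_{t}$. Then $\gamma(t)^{\dagger}=V_{t}^{*}\pi^{\dagger}(\cdot)V_{t}$ with $V_{t}=\operatorname{diag}(U_{t},U_{t},\dots)$, and $V_{t}\in\mathcal{U}_{\mathcal{B}^{\maltese}}$ since $\mathcal{B}^{\maltese}$ contains the constant diagonal operators; thus $\gamma(t)^{\dagger}\in\mathcal{U}_{\mathcal{B}^{\maltese}}(\pi^{\dagger})$ for $t<1$. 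By (1) the map $t\mapsto\gamma(t)^{\dagger}$ is continuous, and $\gamma(1)^{\dagger}=\rho^{\dagger}\in\mathcal{U}_{\mathcal{B}^{\dagger}}(\pi^{\dagger})^{-}\subseteq\mathcal{U}_{\mathcal{B}^{\maltese}}(\pi^{\dagger})^{-}$ by (2), so $t\mapsto\gamma(t)^{\dagger}$ is the required internal path.

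The substance is (4). Let $\gamma:[0,1)\to\mathcal{U}_{\mathcal{E}}$ be the given strong internal path, set $\sigma_{t}=\gamma(t)^{*}\pi^{\dagger}(\cdot)\gamma(t)$, so $\sigma_{t}\to\rho_{1}$. Evaluating at $1_{\mathcal{A}}\otimes e_{11}$ and using (c) with $a=1_{\mathcal{A}}$, the projections $q_{t}:=\gamma(t)^{*}e_{11}\gamma(t)$ satisfy $q_{t}\to e_{11}$; I would fix $t_{0}$ with $\|q_{t}-e_{11}\|<1$ on $[t_{0},1)$ and let $z_{t}\in\mathcal{U}_{\mathcal{E}}$ be the canonical unitary with $z_{t}e_{11}z_{t}^{*}=q_{t}$, which is norm-continuous in $t$ and tends to $1$. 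Put $\gamma'(t)=\gamma(t)z_{t}$; then $\gamma'(t)^{*}e_{11}\gamma'(t)=e_{11}$, so $\gamma'(t)$ commutes with $e_{11}$ and $u_{t}:=e_{11}\gamma'(t)e_{11}$ is a unitary in $e_{11}\mathcal{E}e_{11}=e_{11}\mathcal{B}^{\dagger}e_{11}$ by (a), which I identify with $\mathcal{B}$; $t\mapsto u_{t}$ is norm-continuous. Since $\pi^{\dagger}(a\otimes e_{11})=\pi(a)\otimes e_{11}$ lies in the $e_{11}$-corner and $\gamma'(t)$ commutes with $e_{11}$, we have $\gamma'(t)^{*}\pi^{\dagger}(a\otimes e_{11})\gamma'(t)=u_{t}^{*}(\pi(a)\otimes e_{11})u_{t}$, while on the other hand this equals $z_{t}^{*}\sigma_{t}(a\otimes e_{11})z_{t}\to\rho_{1}(a\otimes e_{11})=\rho(a)\otimes e_{11}$ by (f), (c) and $z_{t}\to 1$; hence, under the identification $e_{11}\mathcal{B}^{\dagger}e_{11}\cong\mathcal{B}$, $u_{t}^{*}\pi(a)u_{t}\to\rho(a)$ for every $a\in\mathcal{A}$. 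Finally (d) enters: $\mathcal{U}_{\mathcal{B}}$ connected is path-connected by the argument preceding Lemma \ref{fd}, so I can prepend a path in $\mathcal{U}_{\mathcal{B}}$ from $1$ to $u_{t_{0}}$ and reparametrize $[0,t_{0}]$ together with $[t_{0},1)$ onto $[0,1)$, obtaining a strong internal path in $\mathcal{U}_{\mathcal{B}}(\pi)^{-}$ from $\pi$ to $\rho$.

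The hard part is the corner compression in (4): one must arrange the correction $z_{t}$ so that it simultaneously straightens $q_{t}$ to $e_{11}$, stays norm-continuous, and converges to $1$, so that the globally conjugated $\pi^{\dagger}$ restricts on the $e_{11}$-corner to an honest unitary conjugate of $\pi$ inside $\mathcal{B}$; connectedness of $\mathcal{U}_{\mathcal{B}}$ is needed precisely to repair the initial segment, where $q_{t}$ need not be near $e_{11}$. A secondary point to watch is exactly which ambient algebra each orbit-closure is taken in, so that inclusions like $\mathcal{U}_{\mathcal{B}^{\dagger}}(\pi^{\dagger})^{-}\subseteq\mathcal{U}_{\mathcal{B}^{\maltese}}(\pi^{\dagger})^{-}$ are legitimate.
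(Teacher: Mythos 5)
Your proposal is correct and follows essentially the same route as the paper: block-diagonal unitaries $\mathrm{diag}(U,\dots,U,1,\dots)$ for the forward half of (2), constant infinite diagonals in $\mathcal{B}^{\maltese}$ for (3), compression to the $e_{11}$-corner for the converse of (2) and for (4), and connectedness of $\mathcal{U}_{\mathcal{B}}$ to repair the initial segment $[0,t_{0}]$. The only (immaterial) difference is that where the paper compresses first and then replaces the nearly-unitary $C_{t}=e_{11}\Gamma(t)e_{11}$ by its polar part $C_{t}[C_{t}^{\ast}C_{t}]^{-1/2}$, you first straighten $\Gamma(t)$ with the canonical unitary conjugating the nearby projections $q_{t}$ and $e_{11}$ so that the compression is exactly unitary; these are interchangeable standard devices.
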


\begin{proof}
$\left(  1\right)  $. This is obvious.

$\left(  2\right)  $. Suppose $\rho\in\mathcal{U}_{\mathcal{B}}\left(
\pi\right)  ^{-}$. Then there is a sequence $\left\{  U_{n}\right\}  $ in
$\mathcal{U}_{\mathcal{B}}$ such that, for every $a\in\mathcal{A}$,%
\[
\lim_{n\rightarrow\infty}\left\Vert U_{n}\pi\left(  a\right)  U_{n}^{\ast
}-\rho\left(  a\right)  \right\Vert =0\text{.}%
\]
For each positive integer $n,$ let $W_{n}=diag\left(  U_{n},\ldots
,U_{n},1,1,1,\ldots\right)  $ in $\mathcal{B}^{\dagger}$ (with $U_{n}$
repeated $n$ times). Since
\[
\left\{  T\in\mathcal{A}^{\dagger}:\lim_{n\rightarrow\infty}\left\Vert
W_{n}\pi\left(  a\right)  W_{n}^{\ast}-\rho\left(  a\right)  \right\Vert
=0\right\}
\]
is a unital subalgebra containing the operators $\left(  A_{ij}\right)
\in\mathcal{A}^{\dagger}$ such that,
\[
\left\{  \left(  i,j\right)  \in\mathbb{N}\times\mathbb{N}:\left(  i,j\right)
\neq\left(  0,0\right)  \right\}
\]
is finite, we see that $\rho^{\dagger}\in\mathcal{U}_{\mathcal{B}^{\dagger}%
}\left(  \pi^{\dagger}\right)  ^{-}$.

Conversely, suppose $\rho^{\dagger}\in\mathcal{U}_{\mathcal{B}^{\dagger}%
}\left(  \pi^{\dagger}\right)  ^{-}$. Then there is a sequence $\left\{
V_{n}\right\}  $ in $\mathcal{B}^{\dagger}$ such that, for every
$T\in\mathcal{A}^{\dagger}$,%
\[
\lim_{n\rightarrow\infty}\left\Vert V_{n}\pi^{\dagger}\left(  T\right)
V_{n}^{\ast}-\rho^{\dagger}\left(  T\right)  \right\Vert =0\text{.}%
\]
Since $\pi^{\dagger}\left(  e_{11}\right)  =\rho^{\dagger}\left(
e_{11}\right)  =e_{11}$, we see that%
\[
\lim_{n\rightarrow\infty}\left\Vert V_{n}e_{11}-e_{11}V_{n}\right\Vert
=\lim_{n\rightarrow\infty}\left\Vert V_{n}\pi^{\dagger}\left(  e_{11}\right)
V_{n}^{\ast}-\rho^{\dagger}\left(  e_{11}\right)  \right\Vert =0\text{,}%
\]
and there is a sequence $\left\{  W_{n}\right\}  $ in $\mathcal{U}%
_{\mathcal{A}}$ such that%
\[
\lim_{n\rightarrow\infty}\left\Vert W_{n}-e_{11}V_{n}e_{11}%
|_{\text{\textrm{ran}}\left(  e_{11}\right)  }\right\Vert =0.
\]
Hence, for every $a\in\mathcal{A}$,%
\[
\lim_{n\rightarrow\infty}\left\Vert W_{n}\pi\left(  a\right)  W_{n}^{\ast
}-\rho\left(  a\right)  \right\Vert =0\text{.}%
\]
Thus $\rho\in\mathcal{U}_{\mathcal{B}}\left(  \pi\right)  ^{-}$.

$\left(  3\right)  $. Suppose there is an internal path $\gamma:\left[
0,1\right]  \rightarrow\mathcal{U}\left(  \pi\right)  ^{-}$ joining $\pi$ to
$\rho$. For $0\leq t<1$ write $\gamma\left(  t\right)  =U_{t}\pi\left(
{}\right)  U_{t}^{\ast}$ with $U_{t}\in\mathcal{U}_{\mathcal{B}}$. For each
$0\leq t<1$ let $V_{t}=diag\left(  U_{t},U_{t},\ldots\right)  \in
\mathcal{U}_{\mathcal{B}^{\maltese}}$ and let $\Gamma\left(  t\right)
=V_{t}\pi^{\mathcal{\dagger}}\left(  {}\right)  V_{t}^{\ast}$. Then, for every
$T\in\mathcal{A}^{\dagger}$,%
\[
\lim_{t\rightarrow1^{-}}\left\Vert V_{t}\pi^{\mathcal{\dagger}}\left(
T\right)  V_{t}^{\ast}-\rho^{\mathcal{\dagger}}\left(  T\right)  \right\Vert
=0\text{.}%
\]

$\left(  4\right)  $. Suppose $\Gamma:[0,1)\rightarrow\mathcal{U}%
_{\mathcal{E}}$ is continuous, and, for every $T\in\mathcal{A}^{\dagger}$,%
\[
\lim_{t\rightarrow1^{-}}\left\Vert \Gamma\left(  t\right)  \pi
^{\mathcal{\dagger}}\left(  T\right)  \Gamma\left(  t\right)  ^{\ast}-\rho
_{1}\left(  T\right)  \right\Vert =0\text{.}%
\]
Since $\rho_{1}\left(  e_{11}\right)  =\pi^{\mathcal{\dagger}}\left(
e_{11}\right)  =e_{11}$, we conclude that
\[
\lim_{t\rightarrow1^{-}}\left\Vert \Gamma\left(  t\right)  e_{11}-e_{11}%
\Gamma\left(  t\right)  \right\Vert =
\]%
\[
\lim_{t\rightarrow1^{-}}\left\Vert \Gamma\left(  t\right)  \pi
^{\mathcal{\dagger}}\left(  e_{11}\right)  \Gamma\left(  t\right)  ^{\ast
}-\rho_{1}\left(  e_{11}\right)  \right\Vert =0\text{.}%
\]
Since $\Gamma\left(  t\right)  $ is unitary, there is a $t_{0}\in\lbrack0,1)$
such that, whenever $t_{0}\leq t<1$ , we have $C_{t}=e_{11}\Gamma\left(
t\right)  e_{11}$ is invertible in $\mathcal{A}$ and if
\[
U_{t}=C_{t}\left[  C_{t}^{\ast}C_{t}\right]  ^{-1/2},
\]
then $U_{t}\in\mathcal{U}_{\mathcal{A}}$ and
\[
\lim_{t\rightarrow1^{-}}\left\Vert C_{t}-U_{t}\right\Vert =0.
\]
Since $\mathcal{U}_{\mathcal{A}}$ is connected, there is a continuous map
$t\mapsto U_{t}\in\mathcal{U}_{\mathcal{A}}$ for $0\leq t\leq t_{0}$ so that
$U_{0}=1.$ If, for every $a\in\mathcal{A}$, we consider $T_{a}=diag\left(
a,0,0,\ldots\right)  $, it is easily seen that%
\[
\lim_{t\rightarrow1^{-}}\left\Vert U_{t}\pi\left(  a\right)  U_{t}^{\ast}%
-\rho\left(  a\right)  \right\Vert =0.
\]

\end{proof}

\bigskip

\begin{theorem}
\label{separable}Suppose $\mathcal{A}$ is a separable unital C*-algebra and
$\pi\in$ \textrm{Rep}$\left(  \mathcal{A},B\left(  \ell^{2}\right)  \right)
$. Then $\mathcal{U}_{B\left(  \ell^{2}\right)  }\left(  \pi\right)  ^{-}$ is path-connected.
\end{theorem}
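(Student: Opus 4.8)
The plan is to reduce to the single--operator theorem (Theorem \ref{old}) by tensoring $\mathcal{A}$ with $\mathcal{K}(\ell^2)$, and then to transport the resulting paths back to representations of $\mathcal{A}$ using Theorem \ref{tensor}(4). Write $\mathcal{B}=B(\ell^2)$ and identify $\ell^2\otimes\ell^2$ with $\ell^2$, so that $\mathcal{B}^{\dagger}\subseteq B(\ell^2\otimes\ell^2)=B(\ell^2)$. Fixing an arbitrary $\rho\in\mathcal{U}_{B(\ell^2)}(\pi)^-$, it suffices to produce a path in $\mathcal{U}_{B(\ell^2)}(\pi)^-$ from $\pi$ to $\rho$. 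By the Olsen--Zame theorem \cite{OZ}, $\mathcal{A}\otimes\mathcal{K}(\ell^2)$ is generated by a single element $a_0$, so $\mathcal{A}^{\dagger}=C^{\ast}(a_0,1)$ and every unital representation $\psi$ of $\mathcal{A}^{\dagger}$ on $\ell^2$ is determined by $\psi(a_0)$. A routine argument (density of $\ast$-polynomials in $a_0$ together with completeness of $\mathrm{Rep}(\mathcal{A}^{\dagger},B(\ell^2))$) shows that $V=\{\psi(a_0):\psi\in\mathrm{Rep}(\mathcal{A}^{\dagger},B(\ell^2))\}$ is closed, that $\psi\mapsto\psi(a_0)$ is a homeomorphism of $\mathrm{Rep}(\mathcal{A}^{\dagger},B(\ell^2))$ onto $V$, and that it carries $\mathcal{U}_{B(\ell^2)}(\psi)^-$ onto $\mathcal{U}_{B(\ell^2)}(\psi(a_0))^-$ and strong internal paths to strong internal paths. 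By Theorem \ref{tensor}(2) we have $\rho^{\dagger}\in\mathcal{U}_{\mathcal{B}^{\dagger}}(\pi^{\dagger})^-\subseteq\mathcal{U}_{B(\ell^2)}(\pi^{\dagger})^-$, so with $X=\pi^{\dagger}(a_0)$ and $Y=\rho^{\dagger}(a_0)$ we get $Y\in\mathcal{U}_{B(\ell^2)}(X)^-$.

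Next I would apply Theorem \ref{old} to $X$ and $Y$ to obtain $W$ with $W\cong W\oplus W\oplus\cdots$ and $X\oplus W\cong Y\oplus W$, such that every $C$ with $C\cong X\oplus W$ lies in $\mathcal{U}_{B(\ell^2)}(X)^-=\mathcal{U}_{B(\ell^2)}(Y)^-$ and is joined to $X$, and to $Y$, by strong internal paths in that set. Since $X\oplus W\in\mathcal{U}_{B(\ell^2)}(X)^-\subseteq V$ and the projection $e_{11}=\pi^{\dagger}(\mathrm{diag}(1,0,0,\ldots))$ has infinite rank and infinite corank, the representation with $a_0\mapsto X\oplus W$ (which lies in $\mathcal{U}_{B(\ell^2)}(\pi^{\dagger})^-$) sends $\mathrm{diag}(1,0,0,\ldots)$ to a projection unitarily equivalent to $e_{11}$. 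Conjugating by a suitable unitary, I would pick $C\cong X\oplus W$ so that the representation $\rho_1$ with $\rho_1(a_0)=C$ satisfies $\rho_1(\mathrm{diag}(1,0,0,\ldots))=e_{11}$; then $a\mapsto\rho_1(\mathrm{diag}(a,0,0,\ldots))$ is supported on $e_{11}\ell^2$ and defines a unital $\rho'\in\mathrm{Rep}(\mathcal{A},B(\ell^2))$ with $\rho_1(\mathrm{diag}(a,0,0,\ldots))=\mathrm{diag}(\rho'(a),0,0,\ldots)$ for every $a$, and (as in the proof of Theorem \ref{tensor}(2)) $\rho'\in\mathcal{U}_{B(\ell^2)}(\pi)^-$. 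Transporting the two strong internal paths from $X$ to $C$ and from $Y$ to $C$ through the homeomorphism above produces a strong internal path in $\mathcal{U}_{B(\ell^2)}(\pi^{\dagger})^-$ from $\pi^{\dagger}$ to $\rho_1$ and one in $\mathcal{U}_{B(\ell^2)}(\rho^{\dagger})^-$ from $\rho^{\dagger}$ to $\rho_1$.

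To descend, take $\mathcal{E}=B(\ell^2\otimes\ell^2)$, which contains $\mathcal{B}^{\dagger}$ with $e_{11}\mathcal{E}e_{11}=e_{11}\mathcal{B}^{\dagger}e_{11}$, and note that $\mathcal{U}_{B(\ell^2)}$ is connected. Applying Theorem \ref{tensor}(4) with base representation $\pi$, limiting representation $\rho_1$, and target $\rho'$ yields a strong internal path in $\mathcal{U}_{B(\ell^2)}(\pi)^-$ from $\pi$ to $\rho'$; applying it with base representation $\rho$ (legitimate since $\rho_1\in\mathcal{U}_{B(\ell^2)}(\rho^{\dagger})^-$, as $C\in\mathcal{U}_{B(\ell^2)}(Y)^-$) yields a strong internal path in $\mathcal{U}_{B(\ell^2)}(\rho)^-$ from $\rho$ to $\rho'$. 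Since $\mathcal{U}_{B(\ell^2)}$ is connected, each strong internal path extends to an honest path into the orbit closure by prepending a path of unitaries from $1$; and $\mathcal{U}_{B(\ell^2)}(\rho)^-=\mathcal{U}_{B(\ell^2)}(\pi)^-$, so concatenating the first path with the reverse of the second gives a path in $\mathcal{U}_{B(\ell^2)}(\pi)^-$ from $\pi$ to $\rho$. As $\rho$ was arbitrary, $\mathcal{U}_{B(\ell^2)}(\pi)^-$ is path--connected.

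The hard part is the final descent. Theorem \ref{old} does not hand over a strong internal path directly from $\pi^{\dagger}$ to $\rho^{\dagger}$, only two strong internal paths meeting at a common operator $C$, and Theorem \ref{tensor}(4) can be invoked only when the limiting representation fixes the corner projection $e_{11}$. The rank/corank count that allows $C$ to be chosen with $\rho_1(\mathrm{diag}(1,0,0,\ldots))=e_{11}$, together with the pivot through the intermediate representation $\rho'$ via two applications of Theorem \ref{tensor}(4), is what makes the two legs join up; verifying the (somewhat redundantly listed) hypotheses of Theorem \ref{tensor}(4) in each application, and the behavior of the homeomorphism $\psi\mapsto\psi(a_0)$ on orbit closures and strong internal paths, is where the care goes.
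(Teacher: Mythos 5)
Your proposal is correct and follows essentially the same route as the paper's proof: pass to $\mathcal{A}^{\dagger}$ via Olsen--Zame, apply Theorem \ref{old} to $X=\pi^{\dagger}(a_0)$ and $Y=\rho^{\dagger}(a_0)$, choose the common endpoint $C\cong X\oplus W$ so that the induced representation $\rho_1$ fixes $e_{11}$ (the paper uses the same infinite rank/corank count), and descend with two applications of Theorem \ref{tensor}(4) through the intermediate representation. The only differences are cosmetic --- you work in $B(\ell^2\otimes\ell^2)$ rather than $B(\ell^2\oplus\ell^2\oplus\cdots)$ and make explicit the homeomorphism $\psi\mapsto\psi(a_0)$ that the paper uses implicitly when converting the operator-level strong internal paths into representation-level ones.
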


\begin{proof}
Suppose $\rho\in\mathcal{U}_{B\left(  \ell^{2}\right)  }\left(  \pi\right)
^{-}$. Then, by Theorem \ref{tensor}, $\rho^{\dagger}\in\mathcal{U}_{B\left(
\ell^{2}\right)  ^{\dagger}}\left(  \pi^{\dagger}\right)  ^{-}.$ But $B\left(
\ell^{2}\right)  ^{\dagger}\subset B\left(  \ell^{2}\oplus\ell^{2}\oplus
\cdots\right)  =\mathcal{E}$. Also, by \cite{OZ} there is an operator
$T\in\mathcal{A}^{\dagger}$ such that $\mathcal{A}^{\dagger}=C^{\ast}\left(
T\right)  $. Thus $\rho\left(  T\right)  \in\mathcal{U}_{\mathcal{E}}\left(
\pi\left(  T\right)  \right)  ^{-}$. We know from Theorem \ref{old} with
$X=\pi^{\dagger}\left(  T\right)  $ and $Y=\rho^{\dagger}\left(  T\right)  $,
that there is a $W$ in $\mathcal{E}$ such $C\in\mathcal{U}_{\mathcal{E}%
}\left(  \pi\left(  T\right)  \right)  ^{-}$ and a strong internal path from
$\pi\left(  T\right)  $ to $C$ in $\mathcal{U}_{\mathcal{E}}\left(  \pi\left(
T\right)  \right)  ^{-}$ and a strong internal path in $\mathcal{U}%
_{\mathcal{E}}\left(  \rho\left(  T\right)  \right)  ^{-}$ from $\rho\left(
T\right)  $ to $C$. There is a representation $\delta$ of $C^{\ast}\left(
T\right)  $ such that $\delta_{0}\left(  T\right)  =W$ and if $\delta\left(
A\right)  =A\oplus\delta\left(  A\right)  $, we have $\delta\left(  T\right)
=T\oplus W$. Since $e_{11}$ and $\delta\left(  e_{11}\right)  =e_{11}%
\oplus\delta_{0}\left(  e_{11}\right)  $ are projections with infinite rank
and infinite corank, there is a unitary operator $V$ such that $V^{\ast}%
\delta\left(  e_{11}\right)  V=e_{11}$ and $V^{\ast}TV\in\mathcal{E}$. Let
$C=V^{\ast}\delta\left(  T\right)  V$ and $\rho_{1}\left(  {}\right)
=V^{\ast}\delta\left(  {}\right)  V$. It follows that there is a $\sigma\in$
\textrm{Rep}$\left(  \mathcal{A},B\left(  \ell^{2}\right)  \right)  $ such
that, for every $a\in\mathcal{A}$,%
\[
\rho_{1}\left(  diag\left(  a,0,0,\cdots\right)  \right)  =\left(
\sigma\left(  a\right)  ,0,0,\cdots\right)  \text{.}%
\]
Since there is an internal path in $\mathcal{U}_{\mathcal{E}}\left(
\pi^{\dagger}\left(  T\right)  \right)  ^{-}$ from $\pi^{\dagger}\left(
T\right)  $ to $\rho_{1}\left(  T\right)  $, there is a strong internal path
in $\mathcal{U}_{\mathcal{E}}\left(  \pi^{\dagger}\right)  ^{-}$ from
$\pi^{\dagger}$ to $\rho_{1}$. It follows from part $\left(  4\right)  $ of
Theorem \ref{tensor} that there is a strong internal path in $\mathcal{U}%
_{B\left(  \ell^{2}\right)  }\left(  \pi\right)  ^{-}$ from $\pi$ to $\sigma.$
Similarly, there is a strong internal path in $\mathcal{U}_{B\left(  \ell
^{2}\right)  }\left(  \rho\right)  ^{-}$ from $\rho$ to $\sigma$. Thus there
is a path in $\mathcal{U}_{B\left(  \ell^{2}\right)  }\left(  \pi\right)
^{-}=\mathcal{U}_{B\left(  \ell^{2}\right)  }\left(  \rho\right)  ^{-}$ from
$\pi$ to $\rho$.
\end{proof}

\bigskip

\section{AF algebras\bigskip}

\begin{lemma}
\label{stitch}Suppose $1\in\mathcal{A}\subset\mathcal{D}$ are separable unital
C*-algebras, $\mathcal{B}$ is a unital C*-algebra and $\pi,\rho\in$
\textrm{Rep}$\left(  \mathcal{D},\mathcal{B}\right)  $, and suppose
$V,W\in\mathcal{U}_{\mathcal{B}}$ such that

\begin{enumerate}
\item for every $x\in\mathcal{D}$,%
\[
W^{\ast}\rho\left(  x\right)  W=\pi\left(  x\right)  \text{,}%
\]

\item for every $x\in\mathcal{A}$,
\[
V^{\ast}\rho\left(  x\right)  V=\pi\left(  x\right)  \text{,}%
\]

\item $\mathcal{U}_{\mathcal{B}\cap\rho\left(  \mathcal{A}\right)  ^{\prime}}$
is connected.
\end{enumerate}

Then there is a path $t\mapsto U_{t}$ of unitary operators in $\mathcal{B}$
such that $U_{0}=V$, $U_{1}=W$, and for every $t\in\left[  0,1\right]  $ and
every $x\in\mathcal{A}$,%
\[
U_{t}^{\ast}\rho\left(  x\right)  U_{t}=\pi\left(  x\right)  \text{.}%
\]

\end{lemma}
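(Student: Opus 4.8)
The plan is to connect $V$ to $W$ by first moving through the unitary group of the relative commutant $\mathcal B\cap\rho(\mathcal A)'$, using hypothesis (3), and then noting that such a move does not disturb the intertwining relation over $\mathcal A$. Observe that $VW^{\ast}$ satisfies, for every $x\in\mathcal A$,
\[
(VW^{\ast})\,\rho(x)\,(VW^{\ast})^{\ast}=V\bigl(W^{\ast}\rho(x)W\bigr)V^{\ast}=V\pi(x)V^{\ast}=\rho(x),
\]
where the middle equality uses (1) (valid in particular for $x\in\mathcal A\subset\mathcal D$) and the last uses (2). Hence $Z:=VW^{\ast}$ lies in $\mathcal U_{\mathcal B\cap\rho(\mathcal A)'}$. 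By hypothesis (3) this unitary group is connected, and since it is the unitary group of a (unital) C*-algebra, connectedness is equivalent to path-connectedness by the elementary argument recalled in Section 2 (unitaries within distance $1$ of $1$ lie in the path component of $1$, which is therefore open, hence clopen). So there is a continuous path $t\mapsto Z_t$ in $\mathcal U_{\mathcal B\cap\rho(\mathcal A)'}$ with $Z_0=1$ and $Z_1=Z=VW^{\ast}$.

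Now set $U_t:=Z_t W$. This is a norm-continuous path of unitaries in $\mathcal B$ with $U_0=1\cdot W=W$ and $U_1=VW^{\ast}W=V$; after reparametrising $t\mapsto 1-t$ we may arrange $U_0=V$ and $U_1=W$ as required. For every $t\in[0,1]$ and every $x\in\mathcal A$, since $Z_t$ commutes with $\rho(x)$ we have
\[
U_t^{\ast}\rho(x)U_t=W^{\ast}Z_t^{\ast}\rho(x)Z_tW=W^{\ast}\rho(x)W=\pi(x),
\]
using (1) in the last step. This is exactly the asserted intertwining, so the path $t\mapsto U_t$ has all the desired properties.

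The only real content is the passage from connectedness to path-connectedness of $\mathcal U_{\mathcal B\cap\rho(\mathcal A)'}$, which is precisely the point isolated in Section 2 and may simply be cited; everything else is a one-line algebraic verification. The main thing to be careful about is that $\mathcal B\cap\rho(\mathcal A)'$ is a unital C*-algebra (it contains $1$ and is norm-closed and $\ast$-closed as an intersection of a C*-algebra with a commutant), so that the Section 2 argument applies verbatim, and that $Z=VW^{\ast}$ genuinely lands in this relative commutant — which, as shown above, is where hypotheses (1) and (2) are used together. I do not anticipate any obstacle beyond bookkeeping the reparametrisation so that the endpoints come out as $U_0=V$, $U_1=W$ rather than the other way around.
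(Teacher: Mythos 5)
Your proof is correct and is essentially the same as the paper's: both identify $VW^{*}$ as a unitary in $\mathcal{B}\cap\rho(\mathcal{A})'$ and conjugate along a path in that relative commutant, differing only in whether the path is appended on the left of $W$ or of $V$. You are in fact slightly more careful than the paper in explicitly invoking the Section~2 equivalence of connectedness and path-connectedness for the unitary group, which hypothesis (3) requires.
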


\begin{proof}
We know that, for every $x\in\mathcal{A}$,%
\[
W^{\ast}\rho\left(  x\right)  W=V^{\ast}\rho\left(  x\right)  V.
\]
Thus $VW^{\ast}=X\in\rho\left(  \mathcal{A}\right)  ^{\prime}\cap\mathcal{B}$.
Thus $W=U^{\ast}V$. Since $\mathcal{U}_{\rho\left(  \mathcal{A}\right)
^{\prime}\cap\mathcal{B}}$ is path connected, there is a path $t\mapsto X_{t}$
of unitary elements in$\ \rho\left(  \mathcal{A}\right)  ^{\prime}%
\cap\mathcal{B}$ such that $X_{0}=1$ and $X_{1}=X$. For $t\in\left[
0,1\right]  $ let $U_{t}=X_{t}^{\ast}V$. Then $U_{t}$ is a path in
$\mathcal{U}_{\mathcal{B}}$, $U_{0}=V$ and $U_{1}=X^{\ast}V=W$. Moreover, for
each $t\in\left[  0,1\right]  $ and each $x\in\mathcal{A}$,
\[
U_{t}^{\ast}\rho\left(  x\right)  U_{t}=V^{\ast}X_{t}\rho\left(  x\right)
X_{t}^{\ast}V=V^{\ast}\rho\left(  x\right)  V=\pi\left(  x\right)  \text{.}%
\]

\end{proof}

\bigskip

\begin{theorem}
\label{direct limit}Suppose $\mathcal{A}_{1}\subset\mathcal{A}_{2}%
\subset\cdots\subset\mathcal{A}$ and $\mathcal{A}=\left[  \cup_{n\in
\mathbb{N}}\mathcal{A}_{n}\right]  ^{-}$ is separable. Suppose $\pi,\rho\in$
\textrm{Rep}$\left(  \mathcal{A},\mathcal{B}\right)  $ such that, for every
$n\in\mathbb{N}$,

\begin{enumerate}
\item $\rho|_{\mathcal{A}_{n}}\in\mathcal{U}_{\mathcal{B}}\left(
\pi|_{\mathcal{A}_{n}}\right)  $,

\item $\mathcal{U}_{\rho\left(  \mathcal{A}_{n}\right)  ^{\prime}%
\cap\mathcal{B}}$ is connected.
\end{enumerate}

Then there is a strong internal path from $\pi$ to $\rho$.
\end{theorem}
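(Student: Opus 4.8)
The plan is to build the desired path by stitching together, through repeated use of Lemma \ref{stitch}, the intertwining unitaries furnished by hypothesis (1) at the successive levels $\mathcal{A}_{n}$, and then reparametrizing the resulting infinite concatenation onto $[0,1)$. First note that each $\mathcal{A}_{n}$ is separable, being a subalgebra of the separable $\mathcal{A}$. For each $n$, hypothesis (1) provides a $U_{n}\in\mathcal{U}_{\mathcal{B}}$ with $U_{n}^{\ast}\pi(x)U_{n}=\rho(x)$ for all $x\in\mathcal{A}_{n}$; I set $V_{n}=U_{n}^{\ast}$, so that $V_{n}^{\ast}\rho(x)V_{n}=\pi(x)$ for all $x\in\mathcal{A}_{n}$. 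Passing to the adjoint here is the one point that needs care: it is done precisely so that the connectedness required in the third hypothesis of Lemma \ref{stitch}, which concerns the relative commutant of $\rho$, is exactly hypothesis (2) of the present theorem and not the (not assumed) analogue for $\pi$.

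For each $n$ I apply Lemma \ref{stitch} with the unital inclusion $\mathcal{A}_{n}\subset\mathcal{A}_{n+1}$ playing the role of $\mathcal{A}\subset\mathcal{D}$, with the restricted representations $\pi|_{\mathcal{A}_{n+1}}$ and $\rho|_{\mathcal{A}_{n+1}}$, and with $W:=V_{n+1}$ and $V:=V_{n}$. Its three hypotheses all hold: $W^{\ast}\rho(x)W=\pi(x)$ for every $x\in\mathcal{A}_{n+1}$ by the defining property of $V_{n+1}$; $V^{\ast}\rho(x)V=\pi(x)$ for every $x\in\mathcal{A}_{n}$ by that of $V_{n}$; and $\mathcal{U}_{\mathcal{B}\cap\rho(\mathcal{A}_{n})^{\prime}}$ is connected by hypothesis (2). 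Lemma \ref{stitch} then yields a continuous path $t\mapsto U_{t}^{(n)}$ in $\mathcal{U}_{\mathcal{B}}$ with $U_{0}^{(n)}=V_{n}$, $U_{1}^{(n)}=V_{n+1}$, and $(U_{t}^{(n)})^{\ast}\rho(x)U_{t}^{(n)}=\pi(x)$ for every $t\in[0,1]$ and every $x\in\mathcal{A}_{n}$.

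Finally, set $J_{n}=[(n-1)/n,\,n/(n+1)]$ for $n\geq1$, so that $\bigcup_{n\geq1}J_{n}=[0,1)$ and consecutive intervals meet only at an endpoint; define $\Gamma:[0,1)\rightarrow\mathcal{U}_{\mathcal{B}}$ by letting $\Gamma|_{J_{n}}$ be $U^{(n)}$ precomposed with the increasing affine bijection $J_{n}\rightarrow[0,1]$. Since $\Gamma$ takes the value $V_{n+1}$ at the common endpoint $n/(n+1)$ whether computed from $J_{n}$ or from $J_{n+1}$, it is well defined and continuous; put $\gamma=\Gamma^{\ast}$. For $x\in\mathcal{A}_{n}$, the identity $(U_{t}^{(m)})^{\ast}\rho(x)U_{t}^{(m)}=\pi(x)$, valid for every $m\geq n$, rearranges to $U_{t}^{(m)}\pi(x)(U_{t}^{(m)})^{\ast}=\rho(x)$, whence $\gamma(t)^{\ast}\pi(x)\gamma(t)=\rho(x)$ for all $t\in[(n-1)/n,1)$. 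Thus for every $x$ in the dense subalgebra $\bigcup_{n}\mathcal{A}_{n}$ the function $t\mapsto\gamma(t)^{\ast}\pi(x)\gamma(t)$ is eventually equal to $\rho(x)$, and a routine $2\varepsilon$-estimate using contractivity of $\pi$ and $\rho$ together with this density upgrades the conclusion to $\lim_{t\rightarrow1^{-}}\gamma(t)^{\ast}\pi(a)\gamma(t)=\rho(a)$ for every $a\in\mathcal{A}$; that is, $\gamma$ is a strong internal path from $\pi$ to $\rho$. I anticipate no genuine obstacle beyond the orientation bookkeeping already flagged: the infinitely many uses of Lemma \ref{stitch} glue without any uniformity issue, because any fixed element of $\mathcal{A}$ is approximated by elements of some $\mathcal{A}_{n}$, and on every piece $J_{m}$ with $m\geq n$ the intertwining is exact.
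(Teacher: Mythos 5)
Your proof is correct and follows essentially the same route as the paper's: choose intertwining unitaries at each level, use Lemma \ref{stitch} on the consecutive inclusions $\mathcal{A}_{n}\subset\mathcal{A}_{n+1}$ to join them by paths in $\mathcal{U}_{\mathcal{B}}$ that intertwine exactly on $\mathcal{A}_{n}$, concatenate, and finish with a density argument; your reparametrization onto $[0,1)$ versus the paper's $[0,\infty)$ is immaterial. If anything, your bookkeeping of which representation gets conjugated (so that hypothesis (3) of Lemma \ref{stitch} is exactly hypothesis (2) here, and the limit is $\rho$ rather than $\pi$) is more careful than the paper's own writeup.
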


\begin{proof}
For each $n\in\mathbb{N}$, choose $U_{n}\in\mathcal{U}_{\mathcal{B}}$ such
that, for every $a\in\mathcal{A}_{n}$,%
\[
U_{n}^{\ast}\rho\left(  a\right)  U_{n}=\pi\left(  a\right)  \text{.}%
\]
It follows from Lemma \ref{stitch} that we can define a path $t\mapsto U_{t}$
from $\left[  n,n+1\right]  $ so that for $n\leq t\leq n+1$ and $a\in
\mathcal{A}_{n}$, we have
\[
U_{t}^{\ast}\rho\left(  a\right)  U_{t}=\pi\left(  a\right)  \text{.}%
\]
Thus the map $t\mapsto U_{t}$ is continuous, and, for every $a\in\cup
_{n\in\mathbb{N}}\mathcal{A}_{n}$ we have%
\[
\lim_{t\rightarrow+\infty}\left\Vert U_{t}^{\ast}\rho\left(  a\right)
U_{t}-\pi\left(  a\right)  \right\Vert =0.
\]
Hence, if we define $\pi_{t}\left(  \cdot\right)  =U_{t}^{\ast}\rho\left(
\cdot\right)  U_{t}$ for $t\in\lbrack0,\infty)$ and $\pi_{\infty}=\rho$, we
have a strong internal path in $\mathcal{U}_{\mathcal{B}}\left(  \pi\right)
^{-}$ from $\pi$ to $\rho$.
\end{proof}

\bigskip

\begin{theorem}
\label{AF}Suppose $\mathcal{A}$ is a separable unital AF C*-algebra,
$\mathcal{B}$ is a C*-algebra with property HUC, and $\pi\in$ \textrm{Rep}%
$\left(  \mathcal{A},\mathcal{B}\right)  $. Then $\mathcal{U}_{\mathcal{B}%
}\left(  \pi\right)  ^{-}$ is path-connected.
\end{theorem}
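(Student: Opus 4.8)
The plan is to reduce the assertion to Theorem \ref{direct limit} by filtering $\mathcal{A}$ by finite-dimensional subalgebras. Since $\mathcal{A}$ is a separable unital AF algebra, I would choose unital finite-dimensional $\ast$-subalgebras $\mathcal{A}_{1}\subseteq\mathcal{A}_{2}\subseteq\cdots$ with $\mathcal{A}=\left[  \bigcup_{n}\mathcal{A}_{n}\right]  ^{-}$. Fix $\rho\in\mathcal{U}_{\mathcal{B}}\left(  \pi\right)  ^{-}$; since $\pi\in\mathcal{U}_{\mathcal{B}}\left(  \pi\right)  $ it suffices to produce a path in $\mathcal{U}_{\mathcal{B}}\left(  \pi\right)  ^{-}$ from $\pi$ to $\rho$, and I would obtain this by verifying the two hypotheses of Theorem \ref{direct limit} for this filtration and the pair $\pi,\rho$.

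Hypothesis $(1)$ is immediate: choosing $U_{k}\in\mathcal{U}_{\mathcal{B}}$ with $\left\Vert U_{k}\pi\left(  a\right)  U_{k}^{\ast}-\rho\left(  a\right)  \right\Vert \to0$ for all $a\in\mathcal{A}$ and restricting to $\mathcal{A}_{n}$ shows $\rho|_{\mathcal{A}_{n}}\in\mathcal{U}_{\mathcal{B}}\left(  \pi|_{\mathcal{A}_{n}}\right)  ^{-}$, and Lemma \ref{fd}, applied to the finite-dimensional algebra $\mathcal{A}_{n}$, says this closure equals $\mathcal{U}_{\mathcal{B}}\left(  \pi|_{\mathcal{A}_{n}}\right)  $, so $\rho|_{\mathcal{A}_{n}}\in\mathcal{U}_{\mathcal{B}}\left(  \pi|_{\mathcal{A}_{n}}\right)  $.

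Hypothesis $(2)$ is the step I expect to be the crux. Because $\rho$ is unital and $1_{\mathcal{A}}\in\mathcal{A}_{n}$, the image $\mathcal{F}_{n}=\rho\left(  \mathcal{A}_{n}\right)  $ is a unital finite-dimensional $\ast$-subalgebra of $\mathcal{B}$ (a quotient of $\mathcal{A}_{n}$), say $\mathcal{F}_{n}\cong\bigoplus_{i=1}^{k}\mathbb{M}_{d_{i}}\left(  \mathbb{C}\right)  $ with matrix units $\left\{  e_{jl}^{\left(  i\right)  }\right\}  $. The standard relative-commutant computation gives a $\ast$-isomorphism $\mathcal{F}_{n}^{\prime}\cap\mathcal{B}\cong\bigoplus_{i=1}^{k}e_{11}^{\left(  i\right)  }\mathcal{B}e_{11}^{\left(  i\right)  }$ (sending $x$ to $\left(  e_{11}^{\left(  i\right)  }xe_{11}^{\left(  i\right)  }\right)  _{i}$, with inverse $\left(  y_{i}\right)  _{i}\mapsto\sum_{i}\sum_{j=1}^{d_{i}}e_{j1}^{\left(  i\right)  }y_{i}e_{1j}^{\left(  i\right)  }$). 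Each $e_{11}^{\left(  i\right)  }$ is a projection in $\mathcal{B}$, so property HUC of $\mathcal{B}$ makes $e_{11}^{\left(  i\right)  }\mathcal{B}e_{11}^{\left(  i\right)  }$ have property UC, and since the unitary group of a finite direct sum is the product of the unitary groups of the summands (hence connected), $\mathcal{U}_{\mathcal{F}_{n}^{\prime}\cap\mathcal{B}}$ is connected. This verifies hypothesis $(2)$.

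With both hypotheses in hand, Theorem \ref{direct limit} yields a strong internal path $\gamma:\left[  0,1\right)  \to\mathcal{U}_{\mathcal{B}}$ with $\lim_{t\to1^{-}}\gamma\left(  t\right)  ^{\ast}\pi\left(  \cdot\right)  \gamma\left(  t\right)  =\rho$, so $t\mapsto\gamma\left(  t\right)  ^{\ast}\pi\left(  \cdot\right)  \gamma\left(  t\right)  $, with value $\rho$ at $t=1$, is a continuous path in $\mathcal{U}_{\mathcal{B}}\left(  \pi\right)  ^{-}$ from $\pi^{\prime}:=\gamma\left(  0\right)  ^{\ast}\pi\left(  \cdot\right)  \gamma\left(  0\right)  \in\mathcal{U}_{\mathcal{B}}\left(  \pi\right)  $ to $\rho$. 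Finally, property HUC applied with the projection $1_{\mathcal{B}}$ shows $\mathcal{U}_{\mathcal{B}}$ is connected, hence, as observed in Section $2$, $\mathcal{U}_{\mathcal{B}}\left(  \pi\right)  $ is path-connected; concatenating a path in $\mathcal{U}_{\mathcal{B}}\left(  \pi\right)  $ from $\pi$ to $\pi^{\prime}$ with the path above produces a path in $\mathcal{U}_{\mathcal{B}}\left(  \pi\right)  ^{-}$ from $\pi$ to $\rho$. Since $\rho$ was arbitrary, $\mathcal{U}_{\mathcal{B}}\left(  \pi\right)  ^{-}$ is path-connected. The only nonroutine point is the relative-commutant decomposition feeding into the HUC hypothesis; everything else is bookkeeping around Theorems \ref{direct limit} and Lemma \ref{fd}.
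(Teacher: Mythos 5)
Your proof is correct and follows essentially the same route as the paper: filter $\mathcal{A}$ by unital finite-dimensional subalgebras, use Lemma \ref{fd} to upgrade $\rho|_{\mathcal{A}_{n}}\in\mathcal{U}_{\mathcal{B}}\left(\pi|_{\mathcal{A}_{n}}\right)^{-}$ to genuine unitary equivalence, identify $\rho\left(\mathcal{A}_{n}\right)^{\prime}\cap\mathcal{B}$ with $\bigoplus_{i}e_{11}^{\left(i\right)}\mathcal{B}e_{11}^{\left(i\right)}$ so that property HUC gives hypothesis $(2)$, and invoke Theorem \ref{direct limit}. The only differences are cosmetic: you work with the quotient image $\rho\left(\mathcal{A}_{n}\right)$ where the paper first reduces to $\ker\pi=0$, and you explicitly patch the starting endpoint of the strong internal path using connectedness of $\mathcal{U}_{\mathcal{B}}$, a detail the paper leaves implicit.
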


\begin{proof}
We can assume that $\ker\pi=0,$ since $\mathcal{A}/\ker\rho$ is a separable
unital AF algebra. Since $\mathcal{A}$ is unital and AF, there is a sequence
$\left\{  \mathcal{A}_{n}\right\}  $ of unital finite-dimensional
C*-subalgebras
\[
1\in\mathcal{A}_{1}\subset\mathcal{A}_{2}\subset\cdots
\]
such that%
\[
\left[  \bigcup_{n=1}^{\infty}\mathcal{A}_{n}\right]  ^{-}=\mathcal{A}\text{.}%
\]
Suppose $\rho\in\mathcal{U}_{\mathcal{M}}\left(  \pi\right)  ^{-}$. Since each
$\mathcal{A}_{n}$ is finite-dimensional, where approximate equivalence is the
same as unitary equivalence, we have $\rho|_{\mathcal{A}_{n}}\in
\mathcal{U}_{\mathcal{B}}\left(  \pi|_{\mathcal{A}_{n}}\right)  $ for each
$n\in\mathbb{N}$.

Fix $n\in\mathbb{N}$ and write $\mathcal{A}_{n}$ as $\mathbb{M}_{s_{1}}\left(
\mathbb{C}\right)  \oplus\cdots\oplus\mathbb{M}_{s_{t}}\left(  \mathbb{C}%
\right)  $ and, for $1\leq k\leq t$, let $\left\{  e_{ij,k}:1\leq i,j\leq
s_{k}\right\}  $ be the system of matrix units for $\mathbb{M}_{s_{k}}\left(
\mathbb{C}\right)  .$ It is easily seen that $\rho\left(  \mathcal{A}%
_{n}\right)  ^{\prime}\cap\mathcal{B}$ is the set of all%
\[
\sum_{k=1}^{t}\sum_{j=1}^{s_{k}}\rho\left(  e_{j1},k\right)  \rho\left(
e_{11,k}\right)  x\rho\left(  e_{11,k}\right)  \rho\left(  e_{ij,k}\right)
\]
for $x\in\mathcal{B}$. It follows that $\rho\left(  \mathcal{A}_{n}\right)
^{\prime}\cap\mathcal{B}$ is isomorphic to
\[
\sum_{1\leq k\leq t}^{\oplus}\rho\left(  e_{11,k}\right)  \mathcal{B}%
\rho\left(  e_{11,k}\right)  .
\]
Since $\mathcal{B}$ has property HUC, we see that $\rho\left(  \mathcal{A}%
_{n}\right)  ^{\prime}\cap\mathcal{B}$ has property UC. The desired conclusion
now follows from Theorem \ref{direct limit}.
\end{proof}

\bigskip

\begin{corollary}
If $\mathcal{A}$ is a separable unital AF C*-algebra and $\mathcal{B}$ is
either an $AF$ C*-algebra or a von Neumann algebra, then, for every $\rho\in$
\textrm{Rep}$\left(  \mathcal{A},\mathcal{B}\right)  $, $\mathcal{U}%
_{\mathcal{B}}\left(  \rho\right)  ^{-}$ is path-connected.
\end{corollary}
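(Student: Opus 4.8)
The plan is to read this off directly from Theorem \ref{AF}; the only thing that needs to be checked is that in each of the two cases the codomain $\mathcal{B}$ has property HUC, after which Theorem \ref{AF} gives precisely the stated conclusion. Note first that, because the elements of \textrm{Rep}$\left(  \mathcal{A},\mathcal{B}\right)  $ are required to be unital $\ast$-homomorphisms, we are implicitly taking $\mathcal{B}$ to be a unital C*-algebra (a von Neumann algebra is automatically unital, and a unital AF algebra is one by hypothesis), so property HUC makes sense for it.

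Suppose $\mathcal{B}$ is a von Neumann algebra. For any projection $p\in\mathcal{B}$ the corner $p\mathcal{B}p$ is again a von Neumann algebra, and in a von Neumann algebra every unitary $U$ has the form $e^{iA}$ with $A=A^{\ast}$, so $t\mapsto e^{i\left(  1-t\right)  A}$ is a norm-continuous path in the unitary group joining $U$ to $1$; hence $\mathcal{U}_{p\mathcal{B}p}$ is connected for every projection $p$, i.e.\ $\mathcal{B}$ has property HUC. This is exactly part $\left(  1\right)  $ of the Lemma of Section 2 on property HUC. Suppose instead that $\mathcal{B}$ is a unital AF algebra. Then $\mathcal{B}$ has property HUC by part $\left(  3\right)  $ of that same Lemma: each finite-dimensional C*-algebra is a finite direct sum of full matrix algebras and hence a von Neumann algebra, so it has property HUC by the previous paragraph, and property HUC passes to direct limits by part $\left(  2\right)  $ of the Lemma.

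In either case $\mathcal{A}$ is separable unital AF and $\mathcal{B}$ is a unital C*-algebra with property HUC, so Theorem \ref{AF} applies and $\mathcal{U}_{\mathcal{B}}\left(  \rho\right)  ^{-}$ is path-connected for every $\rho\in$ \textrm{Rep}$\left(  \mathcal{A},\mathcal{B}\right)  $. I do not expect a genuine obstacle at this stage: all of the substantive work is already contained in Theorem \ref{AF}, and behind it in Lemma \ref{stitch} and Theorem \ref{direct limit}; the corollary is simply the observation that von Neumann algebras and unital AF algebras are the two most natural concrete classes satisfying the HUC hypothesis.
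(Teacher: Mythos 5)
Your proposal is correct and is exactly the argument the paper intends: the corollary is stated without proof precisely because it follows by combining Theorem \ref{AF} with parts (1) and (3) of the Lemma in Section 2 asserting that von Neumann algebras and unital AF algebras have property HUC. Your added remark that $\mathcal{B}$ must be unital for \textrm{Rep}$\left(\mathcal{A},\mathcal{B}\right)$ to make sense is a reasonable clarification of the hypothesis.
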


A separable C*-algebra is \emph{homogeneous} if it is a finite direct sum of
algebras of the form $\mathbb{M}_{n}\left(  C\left(  X\right)  \right)  ,$
where $X$ is a compact metric space. A unital C*-algebra is
\emph{subhomogeneous} if it is a unital subalgebra of a homogeneous
C*-algebra. Every subhomogeneous von Neumann algebra is homogeneous; in
particular, if $\mathcal{A}$ is subhomogeneous, then the second dual
$\mathcal{A}^{\#\#}$ of $\mathcal{A}$ is homogeneous. A C*-algebra is
\emph{approximately subhomogeneous} (ASH) if it is a direct limit of
subhomogeneous C*-algebras.

A (possibly nonseparable) C*-algebra $\mathcal{B}$ is LF if, for every finite
subset $F\subset\mathcal{B}$ and every $\varepsilon>0$ there is a
finite-dimensional C*-algebra $\mathcal{D}$ of $\mathcal{B}$ such that, for
every $b\in F$, \textrm{dist}$\left(  b,\mathcal{D}\right)  <\varepsilon$.
Every separable unital C*-subalgebra of a LF C*-algebra is contained in a
separable AF subalgebra. See \cite{D} for details.

We are interested in a more general property. We say that a unital C*-algebra
$\mathcal{A}$ is \emph{strongly LF-embeddable} if there is an LF C*-algebra
$\mathcal{D}$ such that $\mathcal{A}\subset\mathcal{D}\subset\mathcal{A}%
^{\#\#}$. It is easily shown that an ASH algebra is strongly LF-embeddable,
i.e., if $\left\{  \mathcal{A}_{\lambda}\right\}  $ is an increasingly
directed family of subhomogeneous C*-algebras and $\mathcal{A}=\left(
\cup_{\lambda}\mathcal{A}_{\lambda}\right)  ^{-\left\Vert {}\right\Vert }$,
then $\mathcal{A}\subset\left(  \cup_{\lambda}\mathcal{A}_{\lambda}%
^{\#\#}\right)  ^{-\left\Vert {}\right\Vert }\subset\mathcal{A}^{\#\#}$. The
proof of the next theorem relies on results in \cite{HL}.

\begin{theorem}
\label{LF}Suppose $\mathcal{A}$ is a separable strongly LF embeddable
C*-algebra and $\mathcal{M}$ is a finite von Neumann algebra. Then, for every
$\pi\in$ \textrm{Rep}$\left(  \mathcal{A},\mathcal{M}\right)  $,
$\mathcal{U}_{\mathcal{M}}\left(  \pi\right)  ^{-}$ is path connected.
\end{theorem}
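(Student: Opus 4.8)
The plan is to combine the LF-embedding machinery with the AF result (Theorem \ref{AF}) already established, using the second dual $\mathcal{A}^{\#\#}$ as a bridge and the fact that $\mathcal{M}$, being a von Neumann algebra, has property HUC. Specifically, since $\mathcal{A}$ is strongly LF-embeddable, fix an LF C*-algebra $\mathcal{D}$ with $\mathcal{A}\subset\mathcal{D}\subset\mathcal{A}^{\#\#}$. Given $\pi\in\mathrm{Rep}(\mathcal{A},\mathcal{M})$ and $\rho\in\mathcal{U}_{\mathcal{M}}(\pi)^{-}$, the first step is to extend both $\pi$ and $\rho$ to normal representations $\widetilde\pi,\widetilde\rho$ of $\mathcal{A}^{\#\#}$ into $\mathcal{M}$ (here the hypothesis that $\mathcal{M}$ is a von Neumann algebra — and finite, to keep normal extensions well-behaved — is used; this is presumably where results from \cite{HL} enter). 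I would then argue that $\widetilde\rho|_{\mathcal{D}}$ is approximately unitarily equivalent to $\widetilde\pi|_{\mathcal{D}}$ in $\mathcal{M}$: approximate equivalence on the weak-$\ast$-dense subalgebra $\mathcal{A}$ should propagate to $\mathcal{D}$ because, by the defining property of LF algebras, every separable unital C*-subalgebra of $\mathcal{D}$ sits inside a separable AF subalgebra, and on finite-dimensional (hence on AF) pieces approximate equivalence is detected by the values on $\mathcal{A}$ through the $K$-theoretic/normal-extension data.

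Once $\widetilde\rho|_{\mathcal{D}}\in\mathcal{U}_{\mathcal{M}}(\widetilde\pi|_{\mathcal{D}})^{-}$, the second step is to apply the AF machinery. The algebra $\mathcal{D}$ need not be separable, but $\widetilde\pi$ and $\widetilde\rho$ are determined by a separable unital C*-subalgebra; more precisely, choose a separable unital AF subalgebra $\mathcal{C}\subset\mathcal{D}$ containing (a countable dense-enough subset of) $\mathcal{A}$ together with the elements needed to witness the approximate unitaries — using that $\mathcal{A}$ is separable and every separable unital subalgebra of the LF algebra $\mathcal{D}$ embeds in a separable AF subalgebra. Then $\widetilde\rho|_{\mathcal{C}}\in\mathcal{U}_{\mathcal{M}}(\widetilde\pi|_{\mathcal{C}})^{-}$, $\mathcal{C}$ is separable unital AF, and $\mathcal{M}$ has property HUC (being a von Neumann algebra). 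Theorem \ref{AF} — or more directly Theorem \ref{direct limit}, applied to a chain of finite-dimensional subalgebras of $\mathcal{C}$ exhausting it, whose relative commutants in $\mathcal{M}$ are finite direct sums of corners $p\mathcal{M}p$ and hence have property UC — yields a strong internal path in $\mathcal{U}_{\mathcal{M}}(\widetilde\pi|_{\mathcal{C}})^{-}$ from $\widetilde\pi|_{\mathcal{C}}$ to $\widetilde\rho|_{\mathcal{C}}$, i.e. a continuous $t\mapsto U_t\in\mathcal{U}_{\mathcal{M}}$ on $[0,\infty)$ with $U_t^{\ast}\widetilde\rho(a)U_t\to\widetilde\pi(a)$ for all $a\in\mathcal{C}$, in particular for all $a\in\mathcal{A}$.

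The final step is purely formal: restrict everything back to $\mathcal{A}$. Reparametrizing $[0,\infty)$ to $[0,1)$, the map $t\mapsto U_t^{\ast}\rho(\cdot)U_t$ is a strong internal path in $\mathcal{U}_{\mathcal{M}}(\pi)^{-}$ from $\rho$ to $\pi$ (note $\rho(a)=\widetilde\rho(a)$ and $\pi(a)=\widetilde\pi(a)$ for $a\in\mathcal{A}$, so convergence on $\mathcal{A}$ gives a path ending at $\pi$). Concatenating this (reversed) with the trivial observation that $\pi$ lies in its own orbit closure, any two points $\rho_1,\rho_2\in\mathcal{U}_{\mathcal{M}}(\pi)^{-}$ are joined through $\pi$, so $\mathcal{U}_{\mathcal{M}}(\pi)^{-}$ is path-connected.

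The main obstacle is the first step: showing that approximate unitary equivalence of $\widetilde\pi$ and $\widetilde\rho$ on $\mathcal{A}$ forces it on the larger algebra $\mathcal{D}\subset\mathcal{A}^{\#\#}$ (or on a separable AF $\mathcal{C}$ with $\mathcal{A}\subset\mathcal{C}\subset\mathcal{D}$). One cannot simply restrict a given sequence of unitaries — a single $U$ that nearly conjugates $\pi$ to $\rho$ on finitely many elements of $\mathcal{A}$ need not do anything useful on $\mathcal{C}$. The resolution must go through the normal extensions: approximate equivalence of normal representations of a von Neumann algebra (or of their AF images) is governed by a small set of invariants — equivalence classes of central projections / traces on the finite-dimensional summands — and these invariants for $\mathcal{C}$ are already pinned down by the invariants for $\mathcal{A}$ via the inclusion $\mathcal{A}\subset\mathcal{C}\subset\mathcal{A}^{\#\#}$. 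Making this precise is exactly what the cited results of \cite{HL} are for, and a careful statement of the relevant comparison lemma (finite von Neumann algebra, approximate equivalence $\iff$ agreement of traces composed with the representations) is the crux of the argument.
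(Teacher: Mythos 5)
Your proposal follows essentially the same route as the paper: extend $\pi,\rho$ to normal representations of $\mathcal{A}^{\#\#}$, pass to a separable unital AF algebra $\mathcal{C}$ with $\mathcal{A}\subset\mathcal{C}\subset\mathcal{D}\subset\mathcal{A}^{\#\#}$, propagate the approximate equivalence from $\mathcal{A}$ to $\mathcal{C}$, apply Theorem \ref{AF} (via Theorem \ref{direct limit} and property HUC of $\mathcal{M}$), and restrict the resulting path back to $\mathcal{A}$. The ``main obstacle'' you correctly isolate --- that approximate equivalence on $\mathcal{A}$ forces it on the larger AF algebra inside $\mathcal{A}^{\#\#}$ when $\mathcal{M}$ is finite --- is exactly the step the paper disposes of by citing \cite[Theorem 2]{HL}, so your argument matches the paper's.
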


\begin{proof}
Suppose $\rho\in\mathcal{U}_{\mathcal{M}}\left(  \pi\right)  ^{-}$. It follows
that there are weak*-weak* continuous unital $\ast$-homomorphisms $\hat{\pi
},\hat{\rho}:\mathcal{A}^{\#\#}\rightarrow\mathcal{M}$ such that $\hat{\pi
}|_{\mathcal{A}}=\pi$ and $\hat{\rho}|_{\mathcal{A}}=\rho$. Since
$\mathcal{A}$ is strongly LF embeddable, there is a separable unital AF
C*-algebra $\mathcal{D}$ such that
\[
\mathcal{A}\subset\mathcal{D}\subset\mathcal{A}^{\#\#}\text{.}%
\]
It follows from \cite[Theorem 2]{HL} that $\hat{\rho}|_{\mathcal{D}}%
\in\mathcal{U}_{\mathcal{M}}\left(  \hat{\pi}|_{\mathcal{D}}\right)  ^{-}$. We
know from Theorem \ref{AF} that $\mathcal{U}_{\mathcal{M}}\left(  \hat{\pi
}|_{\mathcal{D}}\right)  ^{-}$ is path connected. Thus there is a path in
$\mathcal{U}_{\mathcal{M}}\left(  \hat{\pi}|_{\mathcal{D}}\right)  ^{-}$ from
$\hat{\pi}|_{\mathcal{D}}$ to $\hat{\rho}|_{\mathcal{D}}$. Restricting to
$\mathcal{A}$, we obtain a path in $\mathcal{U}_{\mathcal{M}}\left(
\pi\right)  ^{-}$ from $\pi$ to $\rho$.
\end{proof}

\section{Abelian algebras}

\begin{lemma}
\label{sep}Suppose $\mathcal{N}$ is a countably generated von Neumann algebra.
Then $\mathcal{N}$ is isomorphic to a direct sum $\sum_{i\in I}^{\oplus
}\mathcal{N}_{i}$ so that each $\mathcal{N}_{i}$ acts on a separable Hilbert
space. In particular, each $\mathcal{N}_{i}$ is $\sigma$-finite.
\end{lemma}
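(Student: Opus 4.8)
The plan is to extract, by a maximality argument, a single orthogonal family of central projections $\{z_{i}\}_{i\in I}$ with $\sum_{i}z_{i}=1$ such that each $\mathcal{N}z_{i}$ is $\ast$-isomorphic to a von Neumann algebra acting on a separable Hilbert space; the decomposition $\mathcal{N}\cong\bigoplus_{i\in I}\mathcal{N}z_{i}$ is then the desired one. First I would fix a faithful normal representation $\mathcal{N}\subseteq B(H)$ and a countable generating set $\{a_{1},a_{2},\dots\}$ for $\mathcal{N}$, and let $\mathcal{A}_{0}$ be the countable $\ast$-subalgebra generated over $\mathbb{Q}+i\mathbb{Q}$ by the $a_{n}$ and $1$. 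The key elementary step is: \emph{for every $\xi\in H$ the cyclic subspace $H_{\xi}=\overline{\mathcal{N}\xi}$ is separable.} Indeed, by the Kaplansky density theorem the unit ball of $\mathcal{A}_{0}$ is strong--operator dense in the unit ball of $\mathcal{N}$, so $\mathcal{N}\xi\subseteq\overline{\mathcal{A}_{0}\xi}$, and $\mathcal{A}_{0}\xi$ is countable. Since $H_{\xi}$ reduces $\mathcal{N}$, its projection $p_{\xi}$ lies in $\mathcal{N}^{\prime}$, and the restriction $a\mapsto a|_{H_{\xi}}$ is a normal $\ast$-homomorphism from $\mathcal{N}$ onto a von Neumann algebra $\mathcal{N}|_{H_{\xi}}\subseteq B(H_{\xi})$ whose kernel is the weak$^{\ast}$-closed ideal $\mathcal{N}(1-z_{\xi})$, where $z_{\xi}$ is the central carrier of $p_{\xi}$, i.e.\ the smallest central projection with $z_{\xi}\geq p_{\xi}$. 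Hence $\mathcal{N}z_{\xi}\cong\mathcal{N}|_{H_{\xi}}$, which acts on the separable Hilbert space $H_{\xi}$.

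Next I would invoke Zorn's lemma to choose a maximal family $\{z_{i}\}_{i\in I}$ of pairwise orthogonal nonzero central projections of $\mathcal{N}$ such that each $\mathcal{N}z_{i}$ is $\ast$-isomorphic to a von Neumann algebra acting on a separable Hilbert space. If $\sum_{i\in I}z_{i}\neq1$, put $e=1-\sum_{i}z_{i}\neq0$; choosing a nonzero $\xi\in eH$ and using that $e$ is central (hence $e\in\mathcal{N}^{\prime}$) gives $H_{\xi}=\overline{\mathcal{N}\xi}\subseteq eH$, so $p_{\xi}\leq e$ and therefore $0\neq z_{\xi}\leq e$. Then $z_{\xi}$ is orthogonal to every $z_{i}$, and by the previous paragraph $\mathcal{N}z_{\xi}$ acts on a separable Hilbert space, contradicting maximality. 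Thus $\sum_{i}z_{i}=1$; since the $z_{i}$ are mutually orthogonal central projections, $\mathcal{N}\cong\bigoplus_{i\in I}\mathcal{N}z_{i}$ with $\mathcal{N}_{i}:=\mathcal{N}z_{i}$ acting on a separable Hilbert space. Finally each $\mathcal{N}_{i}$ is $\sigma$-finite: any family of pairwise orthogonal nonzero projections in a von Neumann algebra acting on a separable Hilbert space has pairwise orthogonal nonzero ranges, hence is countable.

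The substance of the argument is concentrated in one place: separability of the cyclic subspace $H_{\xi}$, which is exactly where countable generation (via Kaplansky density) is used, combined with the standard observation that replacing the non-central projection $p_{\xi}\in\mathcal{N}^{\prime}$ by its central carrier converts the restriction map into the quotient onto a genuine central direct summand of $\mathcal{N}$ — and this quotient still acts on the separable space $H_{\xi}$, even though $z_{\xi}H$ itself may be non-separable. Everything else (the Zorn argument, disjointness, the final $\sigma$-finiteness) is routine; the only inequality worth stating explicitly is $z_{\xi}\leq e$ when $p_{\xi}\leq e$ and $e$ is central, which is immediate from the minimality in the definition of the central carrier.
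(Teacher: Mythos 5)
Your proof is correct and follows essentially the same route as the paper's: cut down to the cyclic subspace $\overline{\mathcal{N}\xi}$, observe it is separable because $\mathcal{N}$ is countably generated, pass to the central carrier of its projection to obtain a central summand isomorphic to a von Neumann algebra on a separable space, and exhaust by Zorn's lemma. You have merely filled in the details the paper leaves implicit (the countable dense $\ast$-subalgebra argument for separability of the cyclic subspace, and the maximality argument), and these are all carried out correctly.
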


\begin{proof}
Suppose $N\subset B\left(  H\right)  $ and $e\in H$ with $\left\Vert
e\right\Vert =1.$ Let $P$ be the orthogonal projection onto $\left(
\mathcal{N}e\right)  ^{-}$. Thus $P\in\mathcal{N}^{\prime}$. Let $P_{e}%
\in\mathcal{Z}\left(  \mathcal{N}\right)  $ be the central cover of $P.$ Then
the map $T\mapsto T|_{P\left(  H\right)  }$ is a normal isomorphism between
$N|_{P_{e}\left(  H\right)  }$ and $N|_{P\left(  H\right)  }.$ Since
$\mathcal{N}$ is countably generated, $P\left(  H\right)  $ is separable. Thus
$N|_{P_{e}\left(  H\right)  }$ is a direct summand of $\mathcal{N}$ that is
isomorphic to a von Neumann algebra on a separable Hilbert space. The rest of
the proof follows from this idea and Zorn's lemma.
\end{proof}

\bigskip

Suppose $\mathcal{M}$ is a von Neumann algebra and $T\in\mathcal{M}$. In
\cite{DH} H. Ding and D. Hadwin defined $\mathcal{M}$-rank$\left(  T\right)  $
to be the Murray von Neumann equivalence class of the orthogonal projection
$\mathfrak{R}\left(  T\right)  $ onto the closure of the range of $T$. We say
$\mathcal{M}$-rank$\left(  S\right)  \leq\mathcal{M}$-rank$\left(  T\right)  $
if and only if there is a projection $P\in\mathcal{M}$ such that
$P\leq\mathfrak{R}\left(  T\right)  $ and $P$ is Murray von Neumann equivalent
to $\mathfrak{R}\left(  S\right)  $. They proved that if a separable unital
C*-algebra is a direct limit of homogeneous algebras, and $\mathcal{M}$ acts
on a separable Hilbert space, then for all $\pi,\rho\in$ \textrm{Rep}$\left(
\mathcal{A},\mathcal{M}\right)  $, $\rho\in\mathcal{U}_{\mathcal{M}}\left(
\pi\right)  ^{-}$ if and only if, for every $x\in\mathcal{A}$,%
\[
\mathcal{M}\text{-rank}\left(  \pi\left(  x\right)  \right)  =\mathcal{M}%
\text{-rank}\left(  \rho\left(  x\right)  \right)  \text{.}%
\]
A key ingredient of the proof of this result was a sequential semicontinuity
of $\mathcal{M}$-rank with respect to the *-SOT that was proved when
$\mathcal{M}$ is a von Neumann algebra acting on a separable Hilbert space
\cite[Theorem 1]{DH}. We extend this to the general case.

\begin{lemma}
\label{semicont}Suppose $\mathcal{M}$ is a von Neumann algebra, $A,B\in
\mathcal{M}$ and, for each $n\in\mathbb{N}$, $B_{n}\in\mathcal{M}$ and
$\mathcal{M}$-rank$\left(  B_{n}\right)  \leq\mathcal{M}$-rank$\left(
A\right)  $. If $B_{n}\rightarrow B$ is the $\ast$-SOT, then $\mathcal{M}%
$-rank$\left(  B\right)  \leq\mathcal{M}$-rank$\left(  A\right)  $.
\end{lemma}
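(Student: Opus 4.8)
The plan is to reduce the general von Neumann algebra case to the separable case already established in \cite[Theorem 1]{DH}, using the direct-sum decomposition furnished by Lemma \ref{sep}. The obstacle to a naive reduction is that $\mathcal{M}$ need not be countably generated, and $A$, $B$, and the $B_n$ only generate a von Neumann subalgebra that \emph{is} countably generated; so the strategy is to pass to that subalgebra first and then split it.

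First I would let $\mathcal{N}\subset\mathcal{M}$ be the von Neumann subalgebra generated by $\{A,B\}\cup\{B_n:n\in\mathbb{N}\}$. Since $\mathcal{M}$-rank and Murray--von Neumann subequivalence are computed using projections in $\mathcal{M}$, not $\mathcal{N}$, I must be careful: the hypothesis $\mathcal{M}\text{-rank}(B_n)\le\mathcal{M}\text{-rank}(A)$ gives projections $P_n\in\mathcal{M}$ with $P_n\le\mathfrak{R}(A)$ and $P_n\sim\mathfrak{R}(B_n)$ via partial isometries in $\mathcal{M}$. The cleanest fix is to enlarge $\mathcal{N}$: adjoin to the generating set the countably many partial isometries $V_n\in\mathcal{M}$ implementing $\mathfrak{R}(B_n)\sim P_n$ (and note $\mathfrak{R}(B_n), \mathfrak{R}(A), \mathfrak{R}(B)$ all lie in the resulting von Neumann algebra, being strong limits of polynomials in $B_n^{*}B_n$, $A^{*}A$, $B^{*}B$ applied through the Borel functional calculus — or, more simply, bounded increasing limits). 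Call the enlarged countably generated von Neumann algebra $\mathcal{N}$; then for all $n$, $\mathfrak{R}(B_n)$ is Murray--von Neumann subequivalent to $\mathfrak{R}(A)$ \emph{inside} $\mathcal{N}$, i.e.\ $\mathcal{N}\text{-rank}(B_n)\le\mathcal{N}\text{-rank}(A)$, and $B_n\to B$ in the $\ast$-SOT of $\mathcal{N}$.

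Next I would apply Lemma \ref{sep} to write $\mathcal{N}\cong\sum_{i\in I}^{\oplus}\mathcal{N}_i$ with each $\mathcal{N}_i$ acting on a separable Hilbert space; let $z_i$ be the corresponding central projections and write $A_i = A z_i$, $B_i = B z_i$, $B_{n,i} = B_n z_i \in\mathcal{N}_i$. Since multiplication by a fixed projection is $\ast$-SOT continuous, $B_{n,i}\to B_i$ in the $\ast$-SOT for each $i$. Ranges respect the central decomposition: $\mathfrak{R}(T z_i) = \mathfrak{R}(T) z_i$, and a subequivalence $\mathfrak{R}(B_n)\precsim\mathfrak{R}(A)$ in $\mathcal{N}$ cuts down to a subequivalence $\mathfrak{R}(B_{n,i})\precsim\mathfrak{R}(A_i)$ in $\mathcal{N}_i$ (multiply the implementing partial isometry by $z_i$). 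Now each $\mathcal{N}_i$ acts on a separable Hilbert space, so \cite[Theorem 1]{DH} applies componentwise and gives $\mathcal{N}_i\text{-rank}(B_i)\le\mathcal{N}_i\text{-rank}(A_i)$, i.e.\ a projection $Q_i\in\mathcal{N}_i$ with $Q_i\le\mathfrak{R}(A_i)$ and $Q_i\sim\mathfrak{R}(B_i)$ in $\mathcal{N}_i$. Setting $Q=\sum_{i}^{\oplus}Q_i\in\mathcal{N}\subset\mathcal{M}$, one has $Q\le\mathfrak{R}(A)$ and $Q\sim\mathfrak{R}(B)$ (assemble the partial isometries over $i$), whence $\mathcal{M}\text{-rank}(B)\le\mathcal{M}\text{-rank}(A)$.

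The main obstacle I anticipate is the bookkeeping in the first step: ensuring that the countably generated algebra $\mathcal{N}$ is large enough to witness \emph{all} the relevant range projections and subequivalences, so that the hypothesis and conclusion of \cite[Theorem 1]{DH} are genuinely statements internal to $\mathcal{N}$ (equivalently, to each separable piece $\mathcal{N}_i$). Once that is set up correctly, the central decomposition and the assembly of partial isometries over the index set $I$ are routine, since Murray--von Neumann (sub)equivalence, range projections, and $\ast$-SOT convergence all localize to central summands.
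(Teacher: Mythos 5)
Your proposal is correct and follows essentially the same route as the paper: adjoin the partial isometries $V_n$ witnessing $\mathfrak{R}(B_n)\precsim\mathfrak{R}(A)$ to $W^{\ast}(\{A,B,B_1,B_2,\ldots\})$ to obtain a countably generated von Neumann algebra $\mathcal{N}$, decompose it via Lemma \ref{sep} into summands on separable Hilbert spaces, apply \cite[Theorem 1]{DH} componentwise, and reassemble the resulting partial isometries. No substantive difference.
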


\begin{proof}
Let $P_{n}=\mathfrak{R}\left(  B_{n}\right)  ,$ $Q=\mathfrak{R}\left(
A\right)  ,$ and, for each $n\in\mathbb{N}$, choose a partial isometry
$V_{n}\in\mathcal{M}$ such that $V_{n}^{\ast}V_{n}=P_{n}$ and $V_{n}%
V_{n}^{\ast}\leq Q$. Let
\[
\mathcal{N}=W^{\ast}\left(  \left\{  A,B,B_{1},V_{1},B_{2},V_{2}%
,\ldots\right\}  \right)  .
\]
Clearly, we have, for every $n\in\mathbb{N}$, that
\[
\mathcal{N}\text{-rank}\left(  B_{n}\right)  \leq\mathcal{N}\text{-rank}%
\left(  A\right)  .
\]
By Lemma \ref{sep}, we can write%
\[
\mathcal{N}=\sum_{i\in I}^{\oplus}\mathcal{N}_{i}%
\]
with each $\mathcal{N}_{i}$ acting on a separable Hilbert space.

Write%
\[
A=\sum_{i\in I}^{\oplus}A_{i},B=\sum_{i\in I}^{\oplus}B_{i},B_{n}=\sum_{i\in
I}^{\oplus}B_{n,i},V_{n}=\sum_{i\in I}^{\oplus}V_{n,i}.
\]
Since $\mathfrak{R}\left(  A\right)  =\sum_{i\in I}^{\oplus}\mathfrak{R}%
\left(  A_{i}\right)  $ and $\mathfrak{R}\left(  B\right)  =\sum_{i\in
I}^{\oplus}\mathfrak{R}\left(  B_{n,i}\right)  $ , for each $i\in I$,
$\mathcal{N}_{i}$-rank$\left(  B_{n,i}\right)  \leq\mathcal{N}_{i}%
$-rank$\left(  A_{i}\right)  $ and the limit in the $\ast$-$SOT$ of $B_{n,i}$
is $B_{i}$. Thus, by \cite[Theorem 1]{DH}, for each $i\in I$,%
\[
\mathcal{N}_{i}\text{-rank}\left(  B_{i}\right)  \leq\mathcal{N}%
_{i}\text{-rank}\left(  A_{i}\right)  \text{.}%
\]
Thus, for each $i\in I$, there is a partial isometry $W_{i}\in\mathcal{N}_{i}$
such that%
\[
W_{i}^{\ast}W_{i}=\mathfrak{R}\left(  B_{i}\right)  \text{ and }W_{i}%
W_{i}^{\ast}\leq\mathfrak{R}\left(  A_{i}\right)  .
\]
Then $W=\sum_{i\in I}^{\oplus}W_{i}$ is a partial isometry in $\mathcal{N}$
such that%
\[
W^{\ast}W=\mathfrak{R}\left(  B\right)  \text{ and }WW^{\ast}\leq
\mathfrak{R}\left(  A\right)  .
\]
Since we also have $W\in\mathcal{M}$, we conclude $\mathcal{M}$-rank$\left(
B\right)  \leq\mathcal{M}$-rank$\left(  A\right)  $.
\end{proof}

\bigskip

\begin{corollary}
\label{rank}If $\mathcal{A}$ is a unital C*-algebra, $\mathcal{M}$ is a von
Neumann algebra and $\pi\in$ \textrm{Rep}$\left(  \mathcal{A},\mathcal{M}%
\right)  $ and $\rho\in\mathcal{U}_{\mathcal{M}}\left(  \pi\right)  ^{-}$,
then, for every $a\in\mathcal{A}$,%
\[
\mathcal{M}\text{-rank}\left(  \pi\left(  a\right)  \right)  =\mathcal{M}%
\text{-rank}\left(  \rho\left(  a\right)  \right)  \text{.}%
\]

\end{corollary}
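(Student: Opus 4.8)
The plan is to reduce the general statement to the sequential semicontinuity result of Lemma \ref{semicont}. Fix $a\in\mathcal{A}$. Since $\rho\in\mathcal{U}_{\mathcal{M}}\left(\pi\right)^{-}$, there is a sequence $\left\{U_{n}\right\}$ in $\mathcal{U}_{\mathcal{M}}$ such that $U_{n}^{\ast}\pi\left(a\right)U_{n}\rightarrow\rho\left(a\right)$ in norm, hence a fortiori in the $\ast$-SOT. For each $n$ set $B_{n}=U_{n}^{\ast}\pi\left(a\right)U_{n}$; then $\mathfrak{R}\left(B_{n}\right)=U_{n}^{\ast}\mathfrak{R}\left(\pi\left(a\right)\right)U_{n}$ is Murray--von Neumann equivalent to $\mathfrak{R}\left(\pi\left(a\right)\right)$, so $\mathcal{M}$-rank$\left(B_{n}\right)=\mathcal{M}$-rank$\left(\pi\left(a\right)\right)$, and in particular $\mathcal{M}$-rank$\left(B_{n}\right)\leq\mathcal{M}$-rank$\left(\pi\left(a\right)\right)$.

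Applying Lemma \ref{semicont} with $A=\pi\left(a\right)$ and $B=\rho\left(a\right)$ gives $\mathcal{M}$-rank$\left(\rho\left(a\right)\right)\leq\mathcal{M}$-rank$\left(\pi\left(a\right)\right)$. For the reverse inequality, I would use the fact that approximate unitary equivalence is symmetric: $\pi\in\mathcal{U}_{\mathcal{M}}\left(\rho\right)^{-}$ as well (the sequence $\left\{U_{n}^{\ast}\right\}$ witnesses this, since $U_{n}\rho\left(a\right)U_{n}^{\ast}=\pi\left(a\right)+U_{n}\left(\rho\left(a\right)-B_{n}\right)U_{n}^{\ast}\rightarrow\pi\left(a\right)$ in norm). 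Then the same argument, with the roles of $\pi$ and $\rho$ interchanged, yields $\mathcal{M}$-rank$\left(\pi\left(a\right)\right)\leq\mathcal{M}$-rank$\left(\rho\left(a\right)\right)$, and the two inequalities combine to the desired equality $\mathcal{M}$-rank$\left(\pi\left(a\right)\right)=\mathcal{M}$-rank$\left(\rho\left(a\right)\right)$.

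There is no real obstacle here: the content is entirely in Lemma \ref{semicont}, and the corollary is a formal consequence. The only points to be careful about are that norm convergence implies $\ast$-SOT convergence (immediate), that conjugation by a unitary preserves $\mathcal{M}$-rank (since $\mathfrak{R}\left(U^{\ast}TU\right)=U^{\ast}\mathfrak{R}\left(T\right)U$ and a unitary implements a Murray--von Neumann equivalence of a projection with its conjugate), and the symmetry of approximate unitary equivalence so that Lemma \ref{semicont} can be invoked in both directions. I would write all three of these as one-line observations rather than belaboring them.
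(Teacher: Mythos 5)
Your proposal is correct and follows the paper's own argument essentially verbatim: both directions come from applying Lemma \ref{semicont} to the sequences $U_{n}^{\ast}\pi\left(a\right)U_{n}\rightarrow\rho\left(a\right)$ and $U_{n}\rho\left(a\right)U_{n}^{\ast}\rightarrow\pi\left(a\right)$, using that unitary conjugation preserves $\mathcal{M}$-rank. Nothing to change.
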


\begin{proof}
Suppose $a\in\mathcal{A}$. There is a sequence $\left\{  U_{n}\right\}  $ in
$\mathcal{U}_{\mathcal{M}}$ such that%
\[
\lim_{n\rightarrow\infty}\left\Vert U_{n}^{\ast}\pi\left(  A\right)
U_{n}-\rho\left(  A\right)  \right\Vert =\lim_{n\rightarrow\infty}\left\Vert
\pi\left(  a\right)  -U_{n}\rho\left(  a\right)  U_{n}^{\ast}\right\Vert =0.
\]
Also $\mathcal{M}$-rank$\left(  U_{n}^{\ast}\pi\left(  a\right)  U_{n}\right)
=\mathcal{M}$-rank$\left(  \pi\left(  a\right)  \right)  $ and $\mathcal{M}%
$-rank$\left(  U_{n}\rho\left(  a\right)  U_{n}^{\ast}\right)  =\mathcal{M}%
$-rank$\left(  \rho\left(  a\right)  \right)  $ for each $n\in\mathbb{N}$.
Thus, by Lemma \ref{semicont},%
\[
\mathcal{M}\text{-rank}\left(  \rho\left(  a\right)  \right)  \leq
\mathcal{M}\text{-rank}\left(  \pi\left(  a\right)  \right)  \text{ and
}\mathcal{M}\text{-rank}\left(  \pi\left(  a\right)  \right)  \leq
\mathcal{M}\text{-rank}\left(  \rho\left(  a\right)  \right)  \text{.}%
\]

\end{proof}

Suppose $\mathcal{A}$ is a unital C*-algebra and $\mathcal{M}$ is a von
Neumann algebra and $\pi:\mathcal{A}\rightarrow\mathcal{M}$ is a unital $\ast
$-homomorphism. Then there is a unique $\ast$-homomorphism $\hat{\pi
}:\mathcal{A}^{\# \#}\rightarrow\mathcal{M}$ that is weak*-weak* continuous
(see \cite{KR}). \bigskip

\begin{lemma}
\label{commutative} Suppose $\left(  X,d\right)  $ is a compact metric space,
$\mathcal{M}$ is a sigma-finite von Neumann algebra, and $\pi,\rho:C\left(
X\right)  \rightarrow\mathcal{M}$, $\rho\in\mathcal{U}_{\mathcal{M}}\left(
\pi\right)  ^{-}$. Then there is a sequence $\mathcal{F}_{1},\mathcal{F}%
_{2},\ldots$ of finite disjoint collections of nonempty Borel sets such that

\begin{enumerate}
\item $\sum_{E\in\mathcal{F}_{n}}\hat{\pi}\left(  \chi_{E}\right)  =\sum
_{E\in\mathcal{F}_{n}}\hat{\rho}\left(  \chi_{E}\right)  =1,$

\item $\left\{  \hat{\pi}\left(  \chi_{E}\right)  :E\in\mathcal{F}%
_{n}\right\}  \subset$ \textrm{sp}$\left(  \left\{  \hat{\pi}\left(  \chi
_{F}\right)  :F\in\mathcal{F}_{n+1}\right\}  \right)  $ and $\left\{
\hat{\rho}\left(  \chi_{E}\right)  :E\in\mathcal{F}_{n}\right\}  \subset$
\textrm{sp}$\left(  \left\{  \hat{\rho}\left(  \chi_{F}\right)  :F\in
\mathcal{F}_{n+1}\right\}  \right)  ,$

\item For every $E\in\mathcal{F}_{n}$, and
\[
\text{\textrm{diam}}\left(  E\right)  <1/n\text{ .}%
\]

\item For every $E\in\cup_{n\in\mathbb{N}}\mathcal{F}_{n}$ $\hat{\pi}\left(
\chi_{E}\right)  $ and $\hat{\rho}\left(  \chi_{E}\right)  $ are Murray von
Neumann equivalent.
\end{enumerate}
\end{lemma}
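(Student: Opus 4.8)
The plan is to build the collections $\mathcal{F}_n$ inductively, using the $\mathcal{M}$-rank invariant (Corollary \ref{rank}) to control the $\hat\pi$-images and $\hat\rho$-images simultaneously. First I would reduce to a convenient bookkeeping device: since $(X,d)$ is a compact metric space, fix a countable base and a sequence of finite open covers $\mathcal{U}_1,\mathcal{U}_2,\ldots$ with $\mathcal{U}_{n+1}$ refining $\mathcal{U}_n$ and $\mathrm{mesh}(\mathcal{U}_n)<1/(2n)$. From each $\mathcal{U}_n$ I produce a finite \emph{Borel partition} $\mathcal{P}_n$ of $X$ by the usual disjointification (intersect sets of the cover with complements of earlier ones), arranged so that $\mathcal{P}_{n+1}$ refines $\mathcal{P}_n$ and each cell of $\mathcal{P}_n$ has diameter $<1/n$. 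Each cell $E$ is a Borel set and $\chi_E$ lies in the Borel functional calculus of $C(X)^{\#\#}$, so $\hat\pi(\chi_E)$ and $\hat\rho(\chi_E)$ are projections in $\mathcal{M}$; refinement gives immediately that $\{\hat\pi(\chi_E):E\in\mathcal{P}_n\}\subset \mathrm{sp}(\{\hat\pi(\chi_F):F\in\mathcal{P}_{n+1}\})$ and similarly for $\hat\rho$, and $\sum_{E\in\mathcal{P}_n}\hat\pi(\chi_E)=\hat\pi(1)=1$ (here I use that $\hat\pi$ is weak*-continuous and $\sum_E\chi_E=1$ in $C(X)^{\#\#}$), likewise for $\hat\rho$. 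So properties (1), (2), (3) are automatic for the $\mathcal{P}_n$; what is \emph{not} automatic is (4), the Murray–von Neumann equivalence $\hat\pi(\chi_E)\sim\hat\rho(\chi_E)$.

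To get (4) I would exploit Corollary \ref{rank}: for every \emph{continuous} $f\in C(X)$ we have $\mathcal{M}\text{-rank}(\pi(f))=\mathcal{M}\text{-rank}(\rho(f))$, i.e.\ $\hat\pi(\mathfrak R(f))\sim\hat\rho(\mathfrak R(f))$, where $\mathfrak R(f)$ is the support projection of $f$, a Borel set that is the complement of a zero set of $f$. The point is that every open set $U\subset X$ is of the form $\{f>0\}$ for a suitable continuous $f$ (e.g.\ $f(x)=d(x,X\setminus U)$), so for \emph{open} sets $U$ we already have $\hat\pi(\chi_U)\sim\hat\rho(\chi_U)$. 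The obstacle is that the cells $E$ of $\mathcal{P}_n$ are differences $U\setminus V$ of open sets, and Murray–von Neumann equivalence is not in general additive/subtractive over non-orthogonal projections. The fix: arrange the construction so that each cell is built by a \emph{nested} sequence of open sets, and then establish a lemma of the form: if $U_1\supset U_2\supset\cdots\supset U_k$ are open, $\hat\pi(\chi_{U_i})\sim\hat\rho(\chi_{U_i})$ for all $i$, and moreover the partial isometries implementing these equivalences can be chosen coherently, then $\hat\pi(\chi_{U_{i}\setminus U_{i+1}})\sim\hat\rho(\chi_{U_i\setminus U_{i+1}})$. Coherence is available because $\mathcal{M}$ is $\sigma$-finite: here one can pass to comparison of projections via a faithful normal state (or, equivalently, via the center-valued trace when $\mathcal{M}$ is finite; in general via the dimension/comparison theory for $\sigma$-finite von Neumann algebras), where equivalence of projections is detected by a genuinely subtractive "dimension" on the relevant abelian subalgebra. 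Concretely, the abelian von Neumann subalgebra $\hat\pi(C(X)^{\#\#})''$ is the range of a normal representation of a measure algebra; $\mathcal{M}$-rank restricted to its projections is an order-preserving, and on an abelian algebra \emph{additive}, invariant, so from $\hat\pi(\chi_U)\sim\hat\rho(\chi_U)$ for all open $U$ one deduces $\hat\pi(\chi_E)\sim\hat\rho(\chi_E)$ for all Borel $E$ in the $\sigma$-algebra generated, including the cells of $\mathcal{P}_n$.

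So the key steps, in order, are: (i) fix refining finite covers of $X$ with shrinking mesh and disjointify to refining Borel partitions $\mathcal{P}_n$, establishing (1)–(3); (ii) use Corollary \ref{rank} together with the representation of open sets as supports of continuous functions to get $\hat\pi(\chi_U)\sim\hat\rho(\chi_U)$ for open $U$; (iii) upgrade this to all Borel sets in the generated $\sigma$-algebra by additivity of $\mathcal{M}$-rank on the abelian subalgebra $\hat\pi(C(X)^{\#\#})''$ (using $\sigma$-finiteness of $\mathcal{M}$ to have a well-behaved comparison theory), thereby establishing (4); (iv) set $\mathcal{F}_n=\mathcal{P}_n$ (discarding empty cells). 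The main obstacle is step (iii): transferring the rank equality from the continuous functional calculus to arbitrary Borel characteristic functions of the partition cells, i.e.\ showing that $\mathcal{M}$-rank behaves additively under the Borel functional calculus of $\pi$; I expect this to follow from the $\sigma$-finiteness hypothesis via a faithful normal weight/state on $\mathcal{M}$ restricted to the abelian algebra generated by $\hat\pi$, but the bookkeeping of choosing the implementing partial isometries coherently across the refinement is the delicate part.
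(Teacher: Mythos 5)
There is a genuine gap, and it sits exactly where you flagged the "delicate part": step (iii), the passage from $\hat{\pi}\left(\chi_{U}\right)\sim\hat{\rho}\left(\chi_{U}\right)$ for open $U$ to the same statement for the cells of a disjointified partition. Murray--von Neumann equivalence is not subtractive in a properly infinite von Neumann algebra, and no faithful normal state or "dimension" on the abelian subalgebra generated by $\hat{\pi}$ can rescue this, because the conclusion you want is actually \emph{false} for the partitions you construct. Concretely, take $X=\left[0,1\right]$, $\mathcal{M}=B\left(\ell^{2}\right)$, $\pi$ the diagonal representation with each rational in $\left[0,1\right]$ occurring once as an eigenvalue, and $\rho$ the same but with the eigenvalue $\tfrac{1}{2}$ occurring twice. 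Then $\mathcal{M}$-rank$\left(\pi\left(f\right)\right)=\mathcal{M}$-rank$\left(\rho\left(f\right)\right)=\infty$ for every nonzero $f\in C\left(X\right)$, so $\rho\in\mathcal{U}_{\mathcal{M}}\left(\pi\right)^{-}$ and $\hat{\pi}\left(\chi_{U}\right)\sim\hat{\rho}\left(\chi_{U}\right)$ for every open $U$; yet for the Borel set $E=\left\{\tfrac{1}{2}\right\}=X\setminus\left(X\setminus\left\{\tfrac{1}{2}\right\}\right)$, a difference of open sets, $\hat{\pi}\left(\chi_{E}\right)$ has rank $1$ while $\hat{\rho}\left(\chi_{E}\right)$ has rank $2$. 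So an arbitrary refining sequence of covers, disjointified in the usual way, will in general produce cells violating conclusion (4); the lemma only claims that \emph{some} sequence $\mathcal{F}_{n}$ works, and the whole content of the proof is in choosing the sets so that the bad cells never appear.

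The paper's device, which your proposal is missing, is to work only with open sets whose topological boundaries are annihilated by both $\hat{\pi}$ and $\hat{\rho}$. This is where $\sigma$-finiteness actually enters: for each $a\in X$ at most countably many spheres $S\left(a,r\right)$ can have $\hat{\pi}\left(\chi_{S\left(a,r\right)}\right)\neq0$ or $\hat{\rho}\left(\chi_{S\left(a,r\right)}\right)\neq0$ (an uncountable orthogonal family of nonzero projections is impossible in a $\sigma$-finite algebra), so one can cover $X$ by small balls in the class $\Sigma$ of open sets with null boundary. Disjointifying by $V_{k}=U_{k}\setminus\bigcup_{j<k}\bar{U}_{j}$ (note the closures) keeps every cell \emph{open} and in $\Sigma$, differing from a true partition only by null boundaries, so (1)--(3) still hold; and since every cell is now open, (4) follows directly from Corollary \ref{rank} applied to a continuous function supported exactly on that cell, with no subtraction of equivalences ever needed. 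Your steps (i), (ii) and your identification of $\hat{\pi}\left(\chi_{U}\right)$ as the range projection of $\pi\left(f\right)$ are correct and coincide with the paper; it is the upgrade to general Borel cells that must be replaced by this null-boundary selection.
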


\begin{proof}
Let \textrm{Bor}$\left(  X\right)  $ be the C*-algebra with the supremum norm.
We then have%
\[
C\left(  X\right)  \subset\text{\textrm{Bor}}\left(  X\right)  \subset
C\left(  X\right)  ^{\# \#}%
\]
and $\hat{\pi}|_{\text{\textrm{Bor}}\left(  X\right)  }$, $\hat{\rho
}|_{\text{\textrm{Bor}}\left(  X\right)  }$ are unital $\ast$-homomorphisms.

Let $\Sigma=\left\{  U\subset X:U\text{ is open and }\hat{\pi}\left(
\chi_{\bar{U}\backslash U}\right)  =\hat{\rho}\left(  \chi_{\bar{U}\backslash
U}\right)  =0\right\}  $. It is easily shown that if $U,V\in\Sigma$, then
$U\backslash\bar{V},$ $U\cup V$, $U\cap V\in\Sigma$. Moreover, if $a\in X$ and
$S\left(  a,r\right)  =\left\{  x\in X:d\left(  a,x\right)  =r\right\}  $ for
all $r>0$, it follows from the fact that $\mathcal{M}$ is $\sigma$-finite that
if $E_{a}=\left\{  r\in\left(  0,\infty\right)  :\hat{\pi}\left(
\chi_{S\left(  a,r\right)  }\right)  =\hat{\rho}\left(  \chi_{S\left(
a,r\right)  }\right)  =0\right\}  $, then $\left(  0,\infty\right)  \backslash
E_{a}$ is countable.

We can assume that $\mathrm{diam}\left(  X\right)  <1$ and we can let
$\mathcal{F}_{1}=\left\{  X\right\}  $.

Suppose $n\in\mathbb{N}$ and $\mathcal{F}_{n}$ has been defined.

For each $a\in X,$ there is an $r_{a}\in E_{a}\cap\left(  0,\frac{1}{2\left(
n+1\right)  }\right)  $. Since $X$ is compact and $\left\{
\text{\textrm{ball}}\left(  a,r_{a}\right)  :a\in X\right\}  $ is an open
cover with sets in $\Sigma$, there is a finite subcover $\left\{  U_{1}%
,\ldots,U_{s}\right\}  .$ We let $V_{1}=U_{1},$ and $V_{k}=U_{k}\backslash
\cup_{1\leq j<k}\bar{U}_{j}$ for $1<k\leq s$. Then $\left\{  V_{1},\ldots
V_{s}\right\}  $ is a disjoint family of open sets in $\Sigma$ with union $V$
such that
\[
\hat{\pi}\left(  \chi_{V}\right)  =\hat{\rho}\left(  \chi_{V}\right)
=1\text{.}%
\]
We now let
\[
\mathcal{F}_{n+1}=\left\{  V_{j}\cap W:1\leq j\leq s,W\in\mathcal{F}_{n}%
,V_{j}\cap W\neq\varnothing\right\}  .
\]

If $U\subset X$ is open and nonempty, then there is a continuous
$f:X\rightarrow\left[  0,1\right]  $ such that $f\left(  x\right)  =0$ if and
only if $x\in X\backslash U$. Thus the sequence $f^{1/n}\uparrow\chi_{U}$,
which means%
\[
f^{1/n}\rightarrow\chi_{U}%
\]
weak* in $C\left(  X\right)  ^{\#\#}$. Thus $\pi\left(  f\right)
^{1/n}\uparrow\hat{\pi}\left(  \chi_{U}\right)  $ and $\rho\left(  f\right)
^{1/n}\uparrow\hat{\rho}\left(  \chi_{U}\right)  $ in the weak* topology. Thus
$\hat{\pi}\left(  \chi_{U}\right)  $ is the projection onto the closure of the
range of $\pi\left(  f\right)  $ and $\hat{\rho}\left(  \chi_{U}\right)  $ is
the projection onto the closure of the range of $\rho\left(  f\right)  $. It
follows from Corollary \ref{rank} that $\hat{\pi}\left(  \chi_{U}\right)  $
and $\hat{\rho}\left(  \chi_{U}\right)  $ are Murray von Neumann equivalent.
\end{proof}

\bigskip

\begin{theorem}
\label{abelian}Suppose $\mathcal{A}$ is a separable unital commutative
C*-algebra and $\mathcal{M}$ is a von Neumann algebra. If $\pi\in$
\textrm{Rep}$\left(  \mathcal{A},\mathcal{M}\right)  $ then $\mathcal{U}%
_{\mathcal{B}}\left(  \pi\right)  ^{-}$ is path-connected. In fact, for every
$\rho\in\mathcal{U}_{\mathcal{M}}\left(  \pi\right)  ^{-}$ there is a strong
internal path from $\pi$ to $\rho$.
\end{theorem}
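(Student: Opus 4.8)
The plan is to reduce to the case of a $\sigma$-finite von Neumann algebra, run Lemma~\ref{commutative} on each $\sigma$-finite summand to produce nested Borel partitions of the spectrum along which $\hat{\pi}$ and $\hat{\rho}$ carry Murray von Neumann equivalent projections, convert this data into a continuous path of unitaries by stitching together unitaries that align successive partitions, and finally reassemble the summands into a single strong internal path.

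Fix $\rho\in\mathcal{U}_{\mathcal{M}}\left(  \pi\right)  ^{-}$. Since $\mathcal{A}$ is separable, unital and abelian, write $\mathcal{A}=C\left(  X\right)$ with $\left(  X,d\right)$ a compact metric space, and after rescaling $d$ assume $\mathrm{diam}\left(  X\right)  <1$. Choose $U_{n}\in\mathcal{U}_{\mathcal{M}}$ with $\left\Vert U_{n}^{\ast}\pi\left(  a\right)  U_{n}-\rho\left(  a\right)  \right\Vert \rightarrow0$ for every $a\in\mathcal{A}$, and put $\mathcal{N}=W^{\ast}\left(  \pi\left(  \mathcal{A}\right)  \cup\rho\left(  \mathcal{A}\right)  \cup\left\{  U_{n}:n\in\mathbb{N}\right\}  \right)$, a countably generated von Neumann algebra with $1\in\mathcal{N}$. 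By Lemma~\ref{sep}, $\mathcal{N}=\sum_{i\in I}^{\oplus}\mathcal{N}_{i}$ with each $\mathcal{N}_{i}$ $\sigma$-finite; the induced decompositions $\pi=\sum_{i\in I}^{\oplus}\pi_{i}$, $\rho=\sum_{i\in I}^{\oplus}\rho_{i}$, $U_{n}=\sum_{i\in I}^{\oplus}U_{n,i}$ give $\rho_{i}\in\mathcal{U}_{\mathcal{N}_{i}}\left(  \pi_{i}\right)  ^{-}$, and since $\mathcal{A}$ is weak* dense in $\mathcal{A}^{\#\#}$ the images of $\hat{\pi},\hat{\rho}$ lie in $\mathcal{N}$, so $\hat{\pi}=\sum_{i\in I}^{\oplus}\hat{\pi}_{i}$, $\hat{\rho}=\sum_{i\in I}^{\oplus}\hat{\rho}_{i}$ with $\hat{\pi}_{i},\hat{\rho}_{i}$ the normal extensions of $\pi_{i},\rho_{i}$. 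As $\mathcal{N}$ and $\mathcal{M}$ share their unit, it suffices to build a strong internal path from $\pi$ to $\rho$ using unitaries in $\mathcal{N}$.

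Fix $i$ and apply Lemma~\ref{commutative} to $\pi_{i},\rho_{i}:C\left(  X\right)  \rightarrow\mathcal{N}_{i}$. This produces finite disjoint Borel collections $\mathcal{F}_{1}^{(i)}=\left\{  X\right\}  ,\mathcal{F}_{2}^{(i)},\ldots$ with $\sum_{E\in\mathcal{F}_{n}^{(i)}}\hat{\pi}_{i}\left(  \chi_{E}\right)  =\sum_{E\in\mathcal{F}_{n}^{(i)}}\hat{\rho}_{i}\left(  \chi_{E}\right)  =1$, with $\mathrm{diam}\left(  E\right)  <1/n$ for $E\in\mathcal{F}_{n}^{(i)}$, with $\hat{\pi}_{i}\left(  \chi_{E}\right)$ Murray von Neumann equivalent to $\hat{\rho}_{i}\left(  \chi_{E}\right)$, and with $\mathcal{F}_{n+1}^{(i)}$ refining $\mathcal{F}_{n}^{(i)}$ in the strong form obtained in the proof of Lemma~\ref{commutative}: for each $E\in\mathcal{F}_{n}^{(i)}$ one has $\hat{\pi}_{i}\left(  \chi_{E}\right)  =\sum_{F}\hat{\pi}_{i}\left(  \chi_{F}\right)$ and $\hat{\rho}_{i}\left(  \chi_{E}\right)  =\sum_{F}\hat{\rho}_{i}\left(  \chi_{F}\right)$, both sums taken over the $F\in\mathcal{F}_{n+1}^{(i)}$ with $F\subseteq E$. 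Summing partial isometries over these partitions, choose $U_{n}^{(i)}\in\mathcal{U}_{\mathcal{N}_{i}}$ with $U_{n}^{(i)\ast}\hat{\pi}_{i}\left(  \chi_{E}\right)  U_{n}^{(i)}=\hat{\rho}_{i}\left(  \chi_{E}\right)$ for all $E\in\mathcal{F}_{n}^{(i)}$, and with $U_{1}^{(i)}=1$. Approximating $f\in C\left(  X\right)$ by an $\mathcal{F}_{n}^{(i)}$-step function within the modulus of continuity $\omega_{f}\left(  1/n\right)$ of $f$ --- which depends only on $f$, not on $i$ --- gives $\left\Vert U_{n}^{(i)\ast}\pi_{i}\left(  f\right)  U_{n}^{(i)}-\rho_{i}\left(  f\right)  \right\Vert \leq2\omega_{f}\left(  1/n\right)$. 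The strong refinement forces $U_{n+1}^{(i)}$ to align $\mathcal{F}_{n}^{(i)}$ as well, so $X_{n}^{(i)}:=U_{n}^{(i)}U_{n+1}^{(i)\ast}$ lies in the relative commutant of $\left\{  \hat{\pi}_{i}\left(  \chi_{E}\right)  :E\in\mathcal{F}_{n}^{(i)}\right\}$ in $\mathcal{N}_{i}$, i.e.\ in the finite direct sum of corners $\sum_{E\in\mathcal{F}_{n}^{(i)}}^{\oplus}\hat{\pi}_{i}\left(  \chi_{E}\right)  \mathcal{N}_{i}\hat{\pi}_{i}\left(  \chi_{E}\right)$. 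Writing $X_{n}^{(i)}=e^{\mathrm{i}A_{n}^{(i)}}$ with $A_{n}^{(i)}=A_{n}^{(i)\ast}$ in that algebra and $\left\Vert A_{n}^{(i)}\right\Vert \leq\pi$ (spectral theorem), the rule $\Gamma_{i}\left(  t\right)  =e^{-\mathrm{i}\left(  t-n\right)  A_{n}^{(i)}}U_{n}^{(i)}$ for $t\in\left[  n,n+1\right]$ patches into a $\pi$-Lipschitz map $\Gamma_{i}:\left[  1,\infty\right)  \rightarrow\mathcal{U}_{\mathcal{N}_{i}}$ with $\Gamma_{i}\left(  1\right)  =1$; since each $e^{-\mathrm{i}\left(  t-n\right)  A_{n}^{(i)}}$ commutes with the relevant step-function images, $\left\Vert \Gamma_{i}\left(  t\right)  ^{\ast}\pi_{i}\left(  f\right)  \Gamma_{i}\left(  t\right)  -\rho_{i}\left(  f\right)  \right\Vert \leq2\omega_{f}\left(  1/n\right)$ for $t\in\left[  n,n+1\right]$.

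Because the Lipschitz constant $\pi$ and the bound $2\omega_{f}\left(  1/n\right)$ are uniform in $i$, the direct sum $\Gamma=\sum_{i\in I}^{\oplus}\Gamma_{i}:\left[  1,\infty\right)  \rightarrow\mathcal{U}_{\mathcal{N}}$ is still $\pi$-Lipschitz, hence continuous, with $\Gamma\left(  1\right)  =1$ and $\lim_{t\rightarrow\infty}\left\Vert \Gamma\left(  t\right)  ^{\ast}\pi\left(  f\right)  \Gamma\left(  t\right)  -\rho\left(  f\right)  \right\Vert =0$ for every $f\in\mathcal{A}$. Composing with a homeomorphism of $\left[  0,1\right)$ onto $\left[  1,\infty\right)$ that sends $0$ to $1$ yields a strong internal path $\gamma$ from $\pi$ to $\rho$ with $\gamma\left(  0\right)  =1$; then $s\mapsto\gamma\left(  s\right)  ^{\ast}\pi\left(  {}\right)  \gamma\left(  s\right)$, extended by $\rho$ at $s=1$, is a path in $\mathcal{U}_{\mathcal{M}}\left(  \pi\right)  ^{-}$ from $\pi$ to $\rho$, and path-connectedness of $\mathcal{U}_{\mathcal{M}}\left(  \pi\right)  ^{-}$ follows by joining any two of its points through $\pi$. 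I expect the reassembly step to be the delicate one: a mere family of component paths on $\sum^{\oplus}\mathcal{N}_{i}$ need not constitute a norm-continuous path, and it is precisely the uniform Lipschitz estimate --- obtained by writing each transition unitary as $e^{\mathrm{i}sA}$ with $\left\Vert A\right\Vert \leq\pi$ instead of using an arbitrary path in the (connected) unitary group of a corner --- together with the $i$-independent diameter and refinement data supplied by Lemma~\ref{commutative}, that makes the reduction to the $\sigma$-finite case legitimate.
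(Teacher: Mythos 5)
Your proposal is correct and takes essentially the same route as the paper's proof: reduce to $\sigma$-finite summands via Lemma \ref{sep}, apply Lemma \ref{commutative} to get nested partitions with Murray--von Neumann equivalent image projections, align them with unitaries, stitch through the relative commutant, and finish with the step-function estimate controlled by the modulus of continuity. The one point where you are more explicit than the paper is the norm-continuity of the reassembled path $\sum_{i}^{\oplus}\Gamma_{i}$, which you secure by a uniform Lipschitz bound from writing each transition unitary as $e^{\mathrm{i}A}$ with $\left\Vert A\right\Vert \leq\pi$; the paper obtains the same thing by running the Lemma \ref{stitch} argument on the global relative commutant $\mathcal{D}_{n}^{\prime}\cap\mathcal{N}$, which is a single von Neumann algebra, so the two treatments coincide.
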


\begin{proof}
Suppose $\rho\in\mathcal{U}_{\mathcal{M}}\left(  \pi\right)  ^{-}$. Since
$\mathcal{A}$ is separable, there is a sequence $\left\{  U_{n}\right\}
\in\mathcal{U}_{\mathcal{M}}$ such that, for every $a\in\mathcal{A}$,%
\[
\lim_{n\rightarrow\infty}\left\Vert U_{n}^{\ast}\pi\left(  a\right)
U_{n}-\rho\left(  a\right)  \right\Vert =0.
\]
Let $\mathcal{N}=W^{\ast}\left(  \pi\left(  \mathcal{A}\right)  \cup
\rho\left(  \mathcal{A}\right)  \cup\left\{  U_{1},U_{2},\ldots\right\}
\right)  $. Then $\mathcal{N}$ is a countably generated von Neumann algebra,
and $\pi,\rho:\mathcal{A}\rightarrow\mathcal{N}$. Hence we can write%
\[
\mathcal{N}=\sum_{i\in I}^{\oplus}\mathcal{N}_{i},
\]
where each $\mathcal{N}_{i}$ acts on a separable Hilbert space, and we can
write%
\[
\pi=\sum_{i\in I}^{\oplus}\pi_{i}\text{ and }\rho=\sum_{i\in I}^{\oplus}%
\rho_{i}.\text{ }%
\]
We also have
\[
\hat{\pi}=\sum_{i\in I}^{\oplus}\hat{\pi}_{i}\text{ and }\hat{\rho}=\sum_{i\in
I}^{\oplus}\hat{\rho}_{i}.
\]

For each $i\in I$, we can choose a sequence $\mathcal{F}_{n,i}$ of families of
nonempty open subsets as in Lemma \ref{commutative}. Since, for each $i\in I$
and each $n\in N$ and each $E\in\mathcal{F}_{n,i}$ we know $\hat{\pi}%
_{i}\left(  \chi_{E}\right)  $ and $\hat{\rho}_{i}\left(  \chi_{E}\right)  $
are Murray von Neumann equivalent in $\mathcal{N}_{i}$ and since%
\[
\sum_{E\in\mathcal{F}_{n}}\hat{\pi}_{i}\left(  \chi_{E}\right)  =\sum
_{E\in\mathcal{F}_{n}}\hat{\rho}_{i}\left(  \chi_{E}\right)  =1,
\]
there is a unitary $U_{n,i}\in\mathcal{N}_{i}$ such that%
\[
U_{n,i}^{\ast}\hat{\pi}_{i}\left(  \chi_{E}\right)  U_{n,i}=\hat{\rho}%
_{i}\left(  \chi_{E}\right)
\]
for every $E\in\mathcal{F}_{n,i}$. For each $n\in\mathbb{N}$, let $U_{n}%
=\sum_{i\in I}^{\oplus}U_{n,i}$ for each $i\in I$, and let $\mathcal{D}%
_{n}=\sum_{i\in I}^{\oplus}$\textrm{sp}$\left(  \left\{  \hat{\pi}_{i}\left(
\chi_{E}\right)  :E\in\mathcal{F}_{n,i}\right\}  \right)  $. Since
$U_{n}U_{n+1}^{\ast}\in\mathcal{D}_{n}^{\prime}$, we know from the proof of
Lemma \ref{stitch} that the map $n\mapsto U_{n}$ on $\mathbb{N}$ extends to a
continuous map $t\mapsto U_{t}=\sum_{i\in I}^{\oplus}U_{t,i}$ such that
$U_{0}=1,$ and such that, for every $n\in\mathbb{N}$, for every $i\in I$,
every $n\leq t<\infty$, and every $E\in\mathcal{F}_{n,i}$%
\[
U_{t,i}^{\ast}\hat{\pi}_{i}\left(  \chi_{E}\right)  U_{t,i}=U_{n,i}^{\ast}%
\hat{\pi}_{i}\left(  \chi_{E}\right)  U_{n,i}=\hat{\rho}\left(  \chi
_{E}\right)  .
\]
Suppose $f\in C\left(  X\right)  $ and $\varepsilon>0$. Since $f$ is uniformly
continuous, there is a positive integer $n_{0}$ such that, if $x,y\in X$ and
$d\left(  x,y\right)  <1/n_{0}$, then $\left\vert f\left(  x\right)  -f\left(
y\right)  \right\vert <\varepsilon/2$.

For each $i\in I$  and all $E\in\mathcal{F}_{n_{0},i}$ we choose
$x_{i,n_{0},E}\in E$. Since \textrm{diam}$\left(  E\right)  <1/n_{0}$, we then
have%
\[
\left\Vert \left[  f-f\left(  x_{n_{0},i,E}\right)  \right]  \chi
_{E}\right\Vert <\varepsilon/2,
\]
so
\[
\left\Vert \pi_{i}\left(  f\right)  -\sum_{E\in\mathcal{F}_{n_{0,i}}}f\left(
x_{n_{0},i,E}\right)  \hat{\pi}_{i}\left(  \chi_{E}\right)  \right\Vert
\leq\varepsilon/2,
\]
and%
\[
\left\Vert \rho_{i}\left(  f\right)  -\sum_{E\in\mathcal{F}_{n_{0,i}}}f\left(
x_{n_{0},i,E}\right)  \hat{\rho}_{i}\left(  \chi_{E}\right)  \right\Vert
\leq\varepsilon/2\text{.}%
\]
Thus, for $t\geq n_{0}$, we have%
\[
\left\Vert U_{t}^{\ast}\pi\left(  f\right)  U_{t}-\rho\left(  f\right)
\right\Vert =
\]%
\[
\sup_{i\in I}\left\Vert U_{t,i}^{\ast}\pi_{i}\left(  f\right)  U_{t,i}%
-\rho_{i}\left(  f\right)  \right\Vert \leq
\]%
\[
\sup_{i\in I}\left\Vert U_{t,i}^{\ast}\left[  \pi_{i}\left(  f\right)
-\sum_{E\in\mathcal{F}_{n_{0,i}}}f\left(  x_{n_{0},i,E}\right)  \hat{\pi}%
_{i}\left(  \chi_{E}\right)  \right]  U_{t,i}\right\Vert
\]%
\[
+\sup_{i\in I}\left\Vert \sum_{E\in\mathcal{F}_{n_{0},i}}f\left(
x_{n_{0},i,E}\right)  \left[  U_{t,i}^{\ast}\hat{\pi}_{i}\left(  \chi
_{E}\right)  U_{t,i}-\hat{\rho}_{i}\left(  \chi_{E}\right)  \right]
\right\Vert
\]%
\[
+\sup_{i\in I}\left\Vert \sum_{E\in\mathcal{F}_{n_{0,i}}}f\left(
x_{n_{0},i,E}\right)  \hat{\rho}_{i}\left(  \chi_{E}\right)  -\rho_{i}\left(
f\right)  \right\Vert
\]%
\[
\leq\varepsilon/2+0+\varepsilon/2\text{ }=\varepsilon\text{.}%
\]
Thus, the map $t\mapsto U_{t}$ is continuous on $[1,\infty)$, and, for every
$f\in C\left(  X\right)  $,%
\[
\lim_{t\rightarrow\infty}\left\Vert U_{t}\pi\left(  f\right)  U_{t}^{\ast
}-\rho\left(  f\right)  \right\Vert =0\text{.}%
\]

\end{proof}

\begin{corollary}
Suppose $\mathcal{A}$ is a separable unital homogeneous C*-algebra and
$\mathcal{M}$ is a von Neumann algebra. If $\pi\in$ \textrm{Rep}$\left(
\mathcal{A},\mathcal{M}\right)  $ then $\mathcal{U}_{\mathcal{B}}\left(
\pi\right)  ^{-}$ is path-connected. In fact, for every $\rho\in
\mathcal{U}_{\mathcal{M}}\left(  \pi\right)  ^{-}$ there is a strong internal
path from $\pi$ to $\rho$.
\end{corollary}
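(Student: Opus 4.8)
The plan is to reduce the homogeneous case to the abelian case (Theorem \ref{abelian}) by exploiting the matricial structure. Write $\mathcal{A}=\bigoplus_{k=1}^{r}\mathbb{M}_{n_k}(C(X_k))$ with each $X_k$ a compact metric space. The idea is that the full matrix units inside each summand already live inside $\mathcal{A}$ (they are scalar-valued), so a representation $\pi$ of $\mathcal{A}$ is completely determined by its restriction to the abelian subalgebra $\mathcal{C}=\bigoplus_k C(X_k)\otimes e_{11,k}$ together with the (rigid) image of the matrix units, and the same applies to $\rho$. More precisely, set $p_k=\pi(e_{11,k})$ and $q_k=\rho(e_{11,k})$ for the diagonal matrix units of the $k$-th block; since $\pi,\rho$ are unital $\ast$-homomorphisms of a finite-dimensional algebra into $\mathcal{M}$ at the level of the matrix units, $\pi|_{\mathcal{C}}$ determines $\pi$ via $\pi(f\otimes e_{ij,k})=\pi(e_{i1,k})\,\pi(f\otimes e_{11,k})\,\pi(e_{1j,k})$, and likewise for $\rho$.

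First I would verify that $\rho|_{\mathcal{C}}\in\mathcal{U}_{\mathcal{M}}(\pi|_{\mathcal{C}})^{-}$: this is immediate, since any sequence of unitaries $U_n\in\mathcal{U}_{\mathcal{M}}$ implementing approximate equivalence of $\pi$ and $\rho$ restricts to the same for $\pi|_{\mathcal{C}}$ and $\rho|_{\mathcal{C}}$. Since $\mathcal{C}$ is separable unital abelian, Theorem \ref{abelian} gives a strong internal path: a continuous $t\mapsto U_t\in\mathcal{U}_{\mathcal{M}}$ on $[1,\infty)$ with $\lim_{t\to\infty}\|U_t^{\ast}\pi(g)U_t-\rho(g)\|=0$ for every $g\in\mathcal{C}$. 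Next, I would upgrade this from $\mathcal{C}$ to all of $\mathcal{A}$. The obstacle here is that $U_t^{\ast}\pi(e_{ij,k})U_t$ need not converge to $\rho(e_{ij,k})$, only the diagonal $U_t^{\ast}p_k U_t\to q_k$ in norm. To fix this, for $t$ large enough that $\|U_t^{\ast}p_kU_t-q_k\|<1$ for all $k$, I would polar-decompose to produce partial isometries inside $\mathcal{M}$ moving $U_t^{\ast}p_kU_t$ onto $q_k$ continuously, then assemble a correction unitary $Z_t$ (built from $\rho$'s matrix units, $\pi$'s matrix units, and these partial isometries) so that $U_t Z_t$ intertwines the matrix units asymptotically while still intertwining $\mathcal{C}$; one checks $Z_t$ can be chosen continuous and $\to$ something so that $V_t=U_tZ_t$ is a strong internal path from $\pi$ to $\rho$ on all of $\mathcal{A}$. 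Concretely, $V_t^{\ast}\pi(f\otimes e_{ij,k})V_t$ is expressed through $V_t^{\ast}\pi(e_{i1,k})V_t$, $V_t^{\ast}\pi(f\otimes e_{11,k})V_t$, $V_t^{\ast}\pi(e_{1j,k})V_t$, and each factor converges in norm to the corresponding $\rho$-image.

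An alternative, cleaner route—and the one I would actually prefer to write—is to invoke the Ding–Hadwin rank criterion directly. By Corollary \ref{rank}, $\rho\in\mathcal{U}_{\mathcal{M}}(\pi)^{-}$ already forces $\mathcal{M}$-rank$(\pi(a))=\mathcal{M}$-rank$(\rho(a))$ for all $a\in\mathcal{A}$; and by the result of Ding and Hadwin quoted in the text (valid since a homogeneous algebra is a direct limit of homogeneous algebras, and using Lemma \ref{sep} to reduce $\mathcal{M}$ to the $\sigma$-finite summands just as in the proof of Theorem \ref{abelian}), this rank condition is equivalent to $\rho\in\mathcal{U}_{\mathcal{M}}(\pi)^{-}$, so nothing new is needed there. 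The genuine content is the construction of the \emph{path}, and for that I would run the argument of Theorem \ref{abelian} with one modification: in Lemma \ref{commutative} the families $\mathcal{F}_{n,i}$ of Borel sets yield spectral projections $\hat{\pi}_i(\chi_E)$; here one instead uses the projections $\hat{\pi}_i(\chi_E\otimes e_{jj,k})$ together with the matrix units $\pi(e_{\ell m,k})$, noting that $\hat{\pi}_i(\chi_E\otimes e_{jj,k})$ and $\hat{\rho}_i(\chi_E\otimes e_{jj,k})$ are Murray–von Neumann equivalent in $\mathcal{N}_i$ (again via Corollary \ref{rank}, since $\chi_E\otimes e_{jj,k}$ is a weak* limit of $f^{1/n}\otimes e_{jj,k}$ for a suitable $f\in C(X_k)$). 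The approximating finite-dimensional subalgebras $\mathcal{D}_n$ of $\mathcal{M}$ are then the $\ast$-algebras generated by these projections and the matrix units; the unitaries $U_{n,i}$ intertwining $\hat{\pi}_i$ and $\hat{\rho}_i$ on $\mathcal{D}_n$ exist because both are finite-dimensional representations with matching rank data, and Lemma \ref{stitch} stitches them into a continuous path $t\mapsto U_t$ on $[1,\infty)$ with $U_1^{-}$—wait, $U_0=1$—such that $\lim_{t\to\infty}\|U_t^{\ast}\pi(a)U_t-\rho(a)\|=0$ for every $a$ in a dense subalgebra of $\mathcal{A}$, hence for all $a\in\mathcal{A}$ by the uniform-continuity/$\varepsilon/2$ estimate exactly as in the proof of Theorem \ref{abelian}.

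The main obstacle I anticipate is purely bookkeeping: ensuring that the correction unitaries (or the finite-dimensional intertwiners $U_{n,i}$) can be chosen \emph{compatibly} as $n$ increases so that Lemma \ref{stitch} applies—i.e., that $U_nU_{n+1}^{\ast}$ lies in the commutant of the relevant finite-dimensional algebra $\mathcal{D}_n$, which requires the nesting condition (item (2) of Lemma \ref{commutative}) to hold for the matricial projections $\hat{\pi}_i(\chi_E\otimes e_{jj,k})$, not just the scalar ones. This follows by refining each $\mathcal{F}_{n,i}$ within each block $k$ simultaneously, which is harmless. Everything else is a direct transcription of the proof of Theorem \ref{abelian}.
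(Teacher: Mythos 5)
Your overall strategy---reducing the homogeneous case to Theorem \ref{abelian} via the matrix units---is surely the intended one (the paper offers no separate proof of this corollary), and either of your two routes can be completed. But your first route, as written, has the order of operations backwards, and this creates a genuine gap: $\pi|_{\mathcal{C}}$ and $\rho|_{\mathcal{C}}$ are not unital representations into $\mathcal{M}$, and worse, they take values in \emph{different} corners $p\mathcal{M}p$ and $q\mathcal{M}q$ with $p=\pi\left(\sum_k e_{11,k}\right)$ and $q=\rho\left(\sum_k e_{11,k}\right)$, so Theorem \ref{abelian} does not apply to them as stated. This mismatch is also what forces you to conjure the vague ``correction unitary $Z_t$,'' which is the weakest point of the sketch.

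The clean order is the reverse. The linear span of the matrix units $\{e_{ij,k}\}$ is a unital finite-dimensional subalgebra of $\mathcal{A}$, so by Lemma \ref{fd} the restriction of $\rho$ to it lies in the (closed, hence exact) unitary orbit of the restriction of $\pi$; choose $W\in\mathcal{U}_{\mathcal{M}}$ with $W^{\ast}\rho\left(e_{ij,k}\right)W=\pi\left(e_{ij,k}\right)$ for all $i,j,k$. Replacing $\rho$ by $W^{\ast}\rho\left(\cdot\right)W$ is harmless (right-multiplying any strong internal path by the fixed unitary $W^{\ast}$ converts a path ending at $W^{\ast}\rho\left(\cdot\right)W$ into one ending at $\rho$), so you may assume $\pi$ and $\rho$ agree on every matrix unit. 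Now $p_k=\pi\left(e_{11,k}\right)=\rho\left(e_{11,k}\right)$, and the corner restrictions $\pi_0,\rho_0:\bigoplus_k C\left(X_k\right)\rightarrow p\mathcal{M}p$ with $p=\sum_k p_k$ are unital representations into the von Neumann algebra $p\mathcal{M}p$, with $\rho_0\in\mathcal{U}_{p\mathcal{M}p}\left(\pi_0\right)^{-}$ (compress the implementing unitaries by $p$ and polar-decompose, exactly as in the proof of part (2) of Theorem \ref{tensor}). Theorem \ref{abelian} then yields a strong internal path $t\mapsto W_t\in\mathcal{U}_{p\mathcal{M}p}$, and $U_t=\sum_k\sum_i\pi\left(e_{i1,k}\right)W_t\pi\left(e_{1i,k}\right)$ is a continuous path of unitaries in $\mathcal{M}$ that commutes with every $\pi\left(e_{ij,k}\right)$ and satisfies $U_t^{\ast}\pi\left(f\otimes e_{ij,k}\right)U_t=\pi\left(e_{i1,k}\right)W_t^{\ast}\pi_0\left(f\right)W_t\pi\left(e_{1j,k}\right)\rightarrow\rho\left(f\otimes e_{ij,k}\right)$. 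No correction $Z_t$ is needed. Your second route (rerunning Lemma \ref{commutative} with the projections $\hat{\pi}\left(\chi_E\otimes e_{jj,k}\right)$ and the matrix units) also works, but it duplicates the entire argument of Theorem \ref{abelian} instead of quoting it.
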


\end{document}